\documentclass[12pt]{amsart}
\usepackage{amsaddr}
\usepackage[margin=1in]{geometry}
\usepackage{wrapfig, caption}
\usepackage[utf8]{inputenc}
\usepackage{amsmath, graphicx}
\usepackage{color, bbold}
\usepackage{subcaption}
\usepackage{amsthm}
\usepackage{amssymb}
\usepackage{amscd}
\usepackage{mathtools}
\usepackage{hyperref}
\usepackage{tikz}
\usepackage{comment}
\usepackage{cleveref}
\usepackage{cancel}
\usepackage{graphicx}
\usepackage{microtype}
\usetikzlibrary{shapes.geometric}
\usepackage[shortlabels]{enumitem} % Sheila 2022 April 27 (warning: DO NOT USE with Beamer!)
\usepackage[numbers]{natbib}

\usepackage{tikz}
\usepackage{comment}
\usepackage{cleveref}
\usepackage{bm}
\usepackage{enumitem}
\usetikzlibrary{tikzmark, patterns, calc}
\tikzset{
  norm/.style     = {shape=circle, draw},
  blue/.style     = {shape=circle, draw, fill=blue!25},
  high/.style     = {shape=circle, draw, color=red},
  bluehigh/.style = {shape=circle, draw, color=red, fill=blue!25},
  red/.style      = {shape=circle, draw, fill=red!25},
  both/.style     = {shape=circle, draw, fill=violet!35},
  root/.style     = {node, bottom color=red!30},
  env/.style      = {treenode, font=\ttfamily\normalsize},
  dummy/.style    = {circle}
}
\tikzstyle{standard}=[circle, draw=black, fill=white, very thick]
\tikzstyle{standard2}=[circle, draw=black, fill=cyan!15, inner sep=2pt, scale=0.5, very thick]
\tikzstyle{standard3}=[circle, draw=black, fill=green!15, inner sep=2pt, scale=0.6, very thick]
\tikzstyle{blue2}=[circle, draw=black, fill=blue!25, very thick, inner sep=2pt]
\tikzstyle{small}=[circle, draw=black, fill=black, very thick, minimum size=4mm]
\tikzstyle{small2}=[circle, draw=black, fill=white, very thick, minimum size=4mm] % Sheila 2023 Feb 17
\tikzstyle{special}=[circle, draw=red!60, fill=red!5, very thick, inner sep=2pt]
\usetikzlibrary{patterns, calc, decorations.pathreplacing}

\newtheorem{theorem}{Theorem}[section]
\newtheorem{lemma}[theorem]{Lemma}
\newtheorem{cor}[theorem]{Corollary}
\newtheorem{prop}[theorem]{Proposition}
\theoremstyle{definition}
\newtheorem{df}[theorem]{Definition}
\newtheorem{rem}[theorem]{Remark}
\newtheorem{ex}[theorem]{Example}

\newtheorem{conj}[theorem]{Conjecture}

% Code to hide proofs (comment out the next four lines if you want the proofs to be included):
%\usepackage{environ}
%\NewEnviron{killcontents}{}
%\let\proof\killcontents
%\let\endproof\endkillcontents

\DeclareMathOperator{\lk}{lk}

 %Sheila for spheres
 %Sheila for sign repn

%\linenumbers
 
%%%%%%%%%%%%%%%%%%%
%%%%%%%%%%%%%%%%%%%
%%%MACROS FOR TINY TREES IN GRID GRAPH SECTION%%%%%%%

%\hLEFTevDOWN \hskip1mm  \hRIGHTevDOWN

\newcommand{\set}[1]{\{#1\}}

% Commands defined by FLY

\title{Neighborhood Complexes of induced $k$-independent graphs}
\author[Y.F. Shen]{Yufeng Shen}
\address{Xi’an Jiaotong University, Xi’an, Shaanxi 710049, China, yufeng\_shen@stu.xjtu.edu.cn}
\author[Z.Y. Song]{Zhiyu Song}
\address{Nankai University, Tianjin 300071, China, 2210655@mail.nankai.edu.cn}
\author[F.L. Yu]{Fenglin Yu}
\address{Peking University, Beijing 100871, China, fenglin@stu.pku.edu.cn}
\author[W.H. Zhou]{Wuhan Zhou}
\address{Peking University, Beijing 100871, China, wuhanzhou@stu.pku.edu.cn}
\author[J.Q. Zhuang]{Jingqi Zhuang}
\address{Fudan University, Shanghai 200433, China, 22300680047@m.fudan.edu.cn}

\date{\today}
% \sloppy
\begin{document}
\subjclass{{57M15, 57Q70, 05C69, 05E45}}
\keywords{neighborhood complexes, induced $k$-independent graphs, homotopy, Morse matching}
\begin{abstract}
    This paper is devoted to \emph{the neighborhood complexes} of the induced $k$-independent graphs. Inspired by the surprising correspondence between \emph{total $k$-cut complex} of $n$-cycle $C_n$ and \emph{neighborhood complex} of stable Kneser graph $SG(n,k)$, we anticipate that the homotopy type of total cut complexes may have some relationships with the neighborhood complexes of induced $k$-independent graphs. We investigated the homotopy type of some total cut complexes and neighborhood complexes of some other graphs, using techniques from algebraic topology and discrete Morse theory.
\end{abstract}
\maketitle

\section{Introduction}\label{sec1:intro}
Recently, there are a kind of simplicial complexes, namely graph complexes, playing a significant role in topological combinatorics. A classcical one is the following result, which was proved by Alon-Frankl-Lovász in 1986, using techniques from graph complexes.
\begin{theorem} [\cite{Alon_1986}]  
    \label{Thm 1.1}
    \[
\chi(KG^r(n,k)) = \left\lceil \frac{n-r(k-1)}{r-1} \right\rceil \quad \text{for all } n \geq rk, r \geq 2. 
\]
\end{theorem}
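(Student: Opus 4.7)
The plan is to establish the two inequalities $\chi(KG^r(n,k)) \le \lceil (n-r(k-1))/(r-1)\rceil$ and $\chi(KG^r(n,k)) \ge \lceil (n-r(k-1))/(r-1)\rceil$ separately, by combinatorial means for the first and topological means for the second. This matches the original Alon--Frankl--Lov\'asz strategy and is the only known route for general $r$.

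For the upper bound, set $t := \lceil (n-r(k-1))/(r-1)\rceil$. I would produce an explicit proper coloring of the vertex set of $KG^r(n,k)$ (the $k$-subsets of $[n]$) using $t$ colors. The natural construction partitions the first $(r-1)(t-1)$ integers into $t-1$ consecutive blocks of size $r-1$ and lumps the remainder into a final block; a $k$-subset $A$ receives the color of the block containing $\min A$. Any $r$ pairwise disjoint $k$-subsets assigned a color in $\{1,\ldots,t-1\}$ would, by pigeonhole, force two of them to share the same minimum element, contradicting disjointness. For color $t$, the available ground set has size $n-(r-1)(t-1)$, which by the choice of $t$ is strictly less than $rk$, so it cannot contain $r$ pairwise disjoint $k$-subsets. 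Verifying this is routine arithmetic and gives the upper bound.

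For the lower bound, I would work topologically through the $\mathbb{Z}_p$-equivariant machinery, where $p$ is a prime divisor of $r$. The key object is a suitable $\mathbb{Z}_p$-simplicial complex naturally associated with the Kneser hypergraph, for instance the $r$-fold deleted join $[n]^{*r}_{\Delta}$ whose simplices are $r$-tuples of pairwise disjoint subsets of $[n]$ (carrying the free cyclic $\mathbb{Z}_p$-action permuting the $r$ coordinates). A proper coloring of $KG^r(n,k)$ by $m$ colors would, via a Sarkaria-type construction, induce a $\mathbb{Z}_p$-equivariant continuous map from this complex to the sphere $S^{(r-1)m-1}$ equipped with the standard free $\mathbb{Z}_p$-action. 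Dold's theorem --- the $\mathbb{Z}_p$-analogue of Borsuk--Ulam --- then forbids such a map whenever the source complex is at least $(r-1)m-1$ connected, forcing $m \ge (n-r(k-1))/(r-1)$ after the connectivity is computed.

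The main obstacle is therefore the connectivity estimate: one must prove that the deleted join complex associated to $[n]$ has $\mathbb{Z}_p$-connectivity at least $n-r(k-1)-2$, and pin down the constant sharply enough that the final bound on $m$ lifts to the ceiling $\lceil (n-r(k-1))/(r-1)\rceil$. The natural tools are a shelling argument on the face poset of the deleted join, or alternatively an acyclic (Morse) matching whose critical cells sit only in the required dimension range. Getting the indices to line up exactly with the ceiling --- particularly for non-prime $r$, where one is forced to use a prime divisor $p$ rather than $r$ itself --- is the delicate point, and most of the effort would be devoted to that bookkeeping.
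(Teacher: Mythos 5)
This theorem is quoted in the paper from \cite{Alon_1986} without proof, so there is no internal argument to compare against; your proposal has to stand on its own, and as it stands it has a genuine gap. The upper bound is fine: the block coloring by the minimum element, the pigeonhole step for the first $t-1$ colors, and the arithmetic showing the last block has fewer than $rk$ elements all check out. The lower bound, however, is only a plan, and the one place where you acknowledge delicacy is exactly where the proposed route breaks down: for composite $r$ you cannot simply run the $\mathbb{Z}_p$-equivariant argument with a prime divisor $p$ of $r$ and expect the sharp bound with denominator $r-1$. The equivariant/Dold machinery applied with the group $\mathbb{Z}_p$ naturally produces the inequality governed by $p$, not by $r$, so for, say, $r=4$ and $p=2$ it does not recover $\lceil (n-4(k-1))/3\rceil$. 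In the actual Alon--Frankl--Lov\'asz proof the topological argument is carried out only for prime $r$, and the composite case is handled by a separate combinatorial composition lemma: if the formula holds for $r_1$ and for $r_2$, then it holds for $r_1 r_2$ (one builds a proper coloring of $KG^{r_1 r_2}(n,k)$ from colorings at the two smaller parameters, and conversely extracts the lower bound). That reduction is a genuinely different idea from ``bookkeeping with a prime divisor,'' and without it your argument proves the theorem only for prime $r$.

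Even in the prime case, the heart of the matter --- the connectivity (or $\mathbb{Z}_p$-index) computation for the deleted-join complex, and the precise Sarkaria-type inequality converting an $m$-coloring into an equivariant map to a sphere of the right dimension --- is deferred rather than carried out, so the lower bound is not yet established at any level of detail. To complete the proof along your lines you would need: (i) the index/connectivity computation for the relevant deleted join (standard, via the join formula, but it must be stated and proved with the correct constants so that the ceiling comes out exactly); and (ii) the multiplicative reduction lemma for composite $r$, replacing the current ``use a prime divisor'' step.
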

It is a generalization of the Kneser Conjecture (\cite{Kneser_1955}), which states that the chromatic number of the Kneser graph $KG(n,k)$ is $n-2k+2$.

Recall that a subset \( S \subseteq [n] \) is \textbf{\( r \)-stable} if any two elements \( x, y \in S \) satisfy \( r \leq |x-y| \leq n-r \). Ziegler conjectured that this result by Alon-Frankl-Lovász holds even for the sub-hypergraph of \( KG^r(n,k) \) induced by the \( r \)-stable vertices. We denote the set of all \( r \)-stable sets in \( [n] \) of size \( k \) by \(\binom{[n]}{k}_{r\text{-}\mathrm{Stab}}\). The sub-hypergraph of \( KG^r(n,k) \) induced by \(\binom{[n]}{k}_{r\text{-}\mathrm{Stab}}\) is denoted by \( KG^r(n,k)_{r\text{-}\mathrm{Stab}} \).
Similarly to Theorem \ref{Thm 1.1}, Ziegler proposed the following conjecture.
\begin{conj}[Ziegler,2002]\label{Ziegler}
Let \( KG^r(n,k)_{r\text{-}\mathrm{Stab}} \) be the \( r \)-stable Kneser hypergraph.
\[
\chi(KG^r(n,k)_{r\text{-}\mathrm{Stab}}) = \left\lceil \frac{n-r(k-1)}{r-1} \right\rceil \quad \text{for all } n \geq rk, r \geq 2.
\]
\end{conj}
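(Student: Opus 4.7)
The plan is to split the equality into matching upper and lower bounds. The upper bound is essentially free: by definition $KG^r(n,k)_{r\text{-}\mathrm{Stab}}$ is the induced sub-hypergraph of $KG^r(n,k)$ on the $r$-stable $k$-subsets, so any proper coloring of $KG^r(n,k)$ restricts to a proper coloring of the stable sub-hypergraph, and Theorem~\ref{Thm 1.1} supplies such a coloring with $\lceil (n-r(k-1))/(r-1)\rceil$ colors. All the real content goes into the matching lower bound.

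For the lower bound I would use the standard topological program for chromatic numbers of hypergraphs. To the $r$-uniform hypergraph $H := KG^r(n,k)_{r\text{-}\mathrm{Stab}}$ I would attach a space carrying a free $\mathbb{Z}_r$-action whose $\mathbb{Z}_r$-index controls $\chi(H)$ from below; a natural candidate is a hypergraph box complex $B_r(H)$ built from $r$-tuples of pairwise edge-forming $r$-stable $k$-subsets of $[n]$. The appropriate generalization to $r$-uniform hypergraphs of Lovász's neighborhood-complex inequality (Sarkaria, Alon--Frankl--Lovász, Ziegler) then yields
\[
\chi(H) \;\geq\; \mathrm{ind}_{\mathbb{Z}_r}\bigl(B_r(H)\bigr) + 1,
\]
so it suffices to prove that $\mathrm{ind}_{\mathbb{Z}_r}(B_r(H)) \geq \lceil (n-r(k-1))/(r-1)\rceil - 1$.

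The core step is then a connectivity/index computation for $B_r(H)$, and here I would be guided precisely by the analogy singled out in the introduction: the neighborhood complex of the stable Kneser graph $SG(n,k)$ (the $r=2$ case of $H$) coincides with the total $k$-cut complex of the cycle $C_n$. I would therefore try to realize $B_r(H)$ combinatorially as a join-like or cut-like construction on a cyclic configuration on $[n]$, and then design a discrete Morse matching on this model that exploits the cyclic ordering — pivoting on a prescribed base arc — to collapse all cells below the target dimension, while remaining $\mathbb{Z}_r$-equivariant so that the index lower bound survives. Applying Dold's theorem (or the $\mathbb{Z}_r$-Borsuk--Ulam theorem) to a sphere of the expected dimension then closes the argument. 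The main obstacle is almost certainly this equivariant Morse matching: it must be acyclic, collapse enough cells, and commute with the $\mathbb{Z}_r$-action, and the interaction between the cyclic $\mathbb{Z}_n$ ordering on $[n]$ and the symmetry of $r$-tuples is delicate. For $r=2$ the plan reduces to Schrijver's theorem via the neighborhood/cut-complex correspondence already invoked in the paper; for general $r\ge 3$ the combinatorics of intersecting $r$-stable $k$-sets under permutations of the $r$-tuple is considerably more intricate, and I expect the principal technical effort to lie in constructing and verifying this equivariant matching.
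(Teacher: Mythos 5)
You are trying to prove something the paper itself does not prove: the statement labeled \ref{Ziegler} is stated as an open conjecture (Ziegler, 2002), and the paper only records that the special case $r=2$ is a theorem of Bj\"orner and de Longueville (Theorem \ref{Bjorner}); no proof of the general case exists in the paper, so your proposal cannot be matching the paper's argument. Judged on its own, your proposal is a research program rather than a proof. The upper bound is indeed immediate from Theorem \ref{Thm 1.1} by restriction, but the entire content is the lower bound, and there you leave the two decisive steps unestablished: (a) you invoke ``the appropriate generalization'' of the neighborhood-complex bound to $r$-uniform hypergraphs in the form $\chi(H)\ge \mathrm{ind}_{\mathbb{Z}_r}(B_r(H))+1$, but the index/Dold-type lower bounds of Sarkaria, Alon--Frankl--Lov\'asz and K\v{r}\'{\i}\v{z} are proved for $r$ prime (free $\mathbb{Z}_p$-actions), and the passage to composite $r$ in Alon--Frankl--Lov\'asz is a combinatorial composition of Kneser hypergraphs that does not preserve $r$-stability --- this is exactly one of the known obstructions that keeps Conjecture \ref{Ziegler} open; and (b) the ``core step,'' the equivariant discrete Morse matching establishing the connectivity/index bound for your box complex of $r$-stable sets, is not constructed at all; you explicitly defer it as the expected technical effort. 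A proof must contain that construction and its verification (acyclicity, equivariance, count of critical cells), since for $r\ge 3$ no analogue of the $r=2$ collapsing argument is known.

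So the gap is not a fixable local step but the absence of the main argument: without a valid $\mathbb{Z}_r$-index lower bound for composite $r$ that respects stability, and without the actual equivariant matching or connectivity computation for your complex $B_r(H)$, the lower bound $\chi(KG^r(n,k)_{r\text{-}\mathrm{Stab}})\ge \left\lceil (n-r(k-1))/(r-1)\right\rceil$ is not established in any case beyond those already known (e.g.\ $r=2$ via Theorem \ref{Bjorner}). If you want to pursue this direction, a realistic first target would be $r$ prime, where the $\mathbb{Z}_p$-Tucker machinery is available and the question reduces to a genuine (and still hard) connectivity estimate for a stable analogue of the deleted join; claiming the full statement for all $r\ge 2$ overreaches what your outline can deliver.
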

The special case for $r=2$ was proved by A. Bj\"{o}rner and M. de Longueville \cite{Bjorner_2003} in 2003. He inherited Lovász's ideas \cite {Lovasz_1978}, through the concept of the neighborhood complex of a graph and the Borsuk-Ulam theorem, to show that the stable Kneser graphs are spheres up to homotopy by the following theorem.
\begin{theorem}\cite{Bjorner_2003}\label{Bjorner}
    The neighborhood complex $\mathcal{N}(SG(n,k))$ is homotopy equivalent to the $(n-2k)$-sphere $\mathbb{S}^{n-2k}$ for $n\ge 2k\ge2$.
\end{theorem}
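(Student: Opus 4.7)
The strategy is to proceed by induction on $n\ge 2k$, with the base case handled by direct enumeration and the inductive step via discrete Morse theory or an explicit suspension decomposition.

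First, a collection $\{A_1,\ldots,A_m\}$ of $2$-stable $k$-subsets of $[n]$ forms a face of $\mathcal{N}(SG(n,k))$ if and only if $[n]\setminus\bigcup_i A_i$ contains a $2$-stable $k$-subset. For the base case $n=2k$, the only $2$-stable $k$-subsets of $[2k]$ (with cyclic adjacency) are the odd-indexed set $O=\{1,3,\ldots,2k-1\}$ and the even-indexed set $E=\{2,4,\ldots,2k\}$. These two vertices are disjoint and each is the unique common neighbor of the other, so $\mathcal{N}(SG(2k,k))$ consists of two isolated points, which is $\bbS^0=\bbS^{n-2k}$.

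For the inductive step, assume $\mathcal{N}(SG(n-1,k))\simeq \bbS^{n-1-2k}$ and fix $n>2k$. I would try to exhibit a homotopy equivalence $\mathcal{N}(SG(n,k))\simeq \susp\,\mathcal{N}(SG(n-1,k))$, which yields $\bbS^{n-2k}$. The suspension structure would arise from singling out the element $n\in[n]$: partition the vertex set of $SG(n,k)$ into $V_1$ (those containing $n$) and $V_0$ (those not containing $n$). I would argue that the subcomplex spanned by $V_0$ retracts onto a copy of $\mathcal{N}(SG(n-1,k))$, after accounting for the cyclic adjacency between $1$ and $n-1$ that disappears once $n$ is removed, and that the stars of two extremal ``shore'' vertices (e.g.\ $\{1,3,\ldots,2k-1\}$ and a suitable rotated counterpart) play the role of the two cones of the suspension.

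The main obstacle lies in carefully controlling the common-neighbor condition under the candidate retraction: a face typically admits several common neighbors, and the retraction must act coherently on all of them while respecting the cyclic $2$-stability constraint. A cleaner alternative, more consonant with the discrete Morse theme flagged in the abstract, is to build an acyclic matching on the face poset of $\mathcal{N}(SG(n,k))$ whose critical cells are exactly the empty face and a single face of dimension $n-2k$. A natural attempt assigns to each face $\sigma$ a canonical witness $B(\sigma)$---for instance the lexicographically minimal $2$-stable $k$-subset disjoint from $\bigcup\sigma$---and toggles $\sigma$ by adding or removing a vertex determined by $B(\sigma)$. Verifying that such a matching is genuinely acyclic and yields exactly the prescribed number of critical cells is the hardest piece; once it is in place, the conclusion $\mathcal{N}(SG(n,k))\simeq \bbS^{n-2k}$ follows because a CW complex with a single $0$-cell and a single $(n-2k)$-cell is necessarily $\bbS^{n-2k}$ (the top cell's attaching map is forced to be nullhomotopic).
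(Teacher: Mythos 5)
Your submission is a plan rather than a proof: both routes you describe stop exactly where the mathematical content begins, and you say so yourself. For the suspension route, the inductive claim $\mathcal{N}(SG(n,k))\simeq \susp\,\mathcal{N}(SG(n-1,k))$ is only plausible a posteriori (both sides are spheres of the right dimensions), and the intermediate assertion you would need is genuinely problematic as stated: the subcomplex of $\mathcal{N}(SG(n,k))$ spanned by the vertices not containing $n$ is \emph{not} a copy of $\mathcal{N}(SG(n-1,k))$ --- a $2$-stable set in $C_n$ avoiding $n$ may contain both $1$ and $n-1$, which is forbidden in $C_{n-1}$, and a face of this subcomplex may have all of its common neighbors passing through $n$. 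So the ``retraction accounting for the adjacency between $1$ and $n-1$'' is precisely the theorem in disguise, and no construction or verification is offered. For the Morse-theoretic route, the rule ``toggle a vertex determined by the lexicographically minimal $2$-stable witness $B(\sigma)$'' is not even obviously a matching (adding the toggled vertex changes $\operatorname{supp}(\sigma)$ and hence possibly $B(\sigma)$), acyclicity is not addressed, and since $\mathcal{N}(SG(n,k))$ is not contractible the rule must break down somewhere; locating and counting those failures \emph{is} the proof, and you explicitly defer it. (The base case $n=2k$ and the final ``one $0$-cell plus one top cell gives a sphere'' observation are fine, modulo a small conflation between the empty face and a critical $0$-cell in your Morse bookkeeping.)

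For comparison: the paper does not reprove this statement; it is quoted from Bj\"orner--de Longueville. The paper's own contribution in this direction (Section \ref{sec3:nerve}) is an independent derivation of the homotopy type that avoids both of your routes: one covers $\mathcal{N}(SG(n,k))$ by the subcomplexes $A_i$ of faces admitting a witness avoiding $i$, shows every nonempty intersection is collapsible via the Multicone Lemma \ref{Multicone Lemma} (ordering the witnesses lexicographically and toggling the \emph{fixed} shifted vertex $w_i$ on each layer $\Gamma_i\setminus\Gamma_{i-1}$, which sidesteps the well-definedness issue your $B(\sigma)$-toggle runs into), identifies the nerve of this cover with $\Delta_k^t(C_n)$, and then invokes Theorem \ref{thm:total-cut-cycle} together with the Nerve Lemma \ref{Nerve Lemma}. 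If you want to complete your argument in the spirit of this paper, that cover-plus-nerve strategy is the concrete missing mechanism; as written, your proposal has a genuine gap at its central step.
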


In 2024, \cite{Bayer_2024} and \cite{Bayer_2024_02}, Bayer et al. introduced two new families of graph complexes called \emph{cut complexes} and \emph{total cut complexes}. Their work is motivated by a famous theorem of Ralf Fröberg \cite{Froberg_1990} connecting commutative algebra and graph theory through topology. In \cite{Bayer_2024_02}, Bayer et al. identified the homotopy type of the $k$-total cut complex of $C_n$:
\begin{theorem}\cite{Bayer_2024_02}\label{thm:total-cut-cycle} For $n<2k$, $\Delta_k^t(C_n)$ is the void complex and therefore shellable.
For $n\ge 2k\ge 4$,
$\Delta_k^t(C_n)$ is homotopy equivalent to a single sphere in dimension $n-2k$.
\end{theorem}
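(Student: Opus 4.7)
My plan is to split the proof into three cases: $n < 2k$, $n = 2k$, and $n > 2k$.

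If $n < 2k$, the cycle $C_n$ has independence number $\lfloor n/2 \rfloor < k$, so no face of $\Delta_k^t(C_n)$ exists and the void complex is shellable by convention. If $n = 2k$, direct inspection suffices: $C_{2k}$ has exactly two independent $k$-sets, the color classes $I_{\mathrm{odd}} = \{1, 3, \ldots, 2k-1\}$ and $I_{\mathrm{even}} = \{2, 4, \ldots, 2k\}$, which partition $[2k]$. A set $\sigma$ is a face iff $[2k] \setminus \sigma$ contains an independent $k$-set, which happens iff $\sigma \subseteq I_{\mathrm{odd}}$ or $\sigma \subseteq I_{\mathrm{even}}$. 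Since these classes are disjoint, the complex is the disjoint union of two $(k-1)$-simplices, homotopy equivalent to $S^0 = S^{n-2k}$.

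For $n > 2k$, I would apply discrete Morse theory: construct an acyclic matching on the face poset of $\Delta_k^t(C_n)$ with exactly two critical cells, one in dimension $0$ and one in dimension $n - 2k$. By Forman's theorem, the complex is then homotopy equivalent to a CW model with a single $0$-cell and a single $(n-2k)$-cell, forcing $\Delta_k^t(C_n) \simeq S^{n-2k}$. A natural scheme: for each face $\sigma$, let $W(\sigma)$ be the lexicographically smallest independent $k$-subset of $[n] \setminus \sigma$, and pair $\sigma$ with $\sigma \triangle \{v\}$ for a canonical vertex $v = v(\sigma, W(\sigma))$ determined by the cyclic structure; the unmatched cells should then reduce to two rigid configurations selected by this structure.

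The main obstacle is choosing the witness function and the vertex-selection rule so that the matching is simultaneously acyclic, has exactly two critical cells, and places them in the correct dimensions. Because $C_n$ admits $\frac{n}{n-k}\binom{n-k}{k}$ independent $k$-sets and a face can have many competing witnesses, controlling acyclicity under the symmetric-difference pairing is delicate. A plausible alternative is first to invoke the nerve theorem for the cover of $\Delta_k^t(C_n)$ by the maximal simplices $F_I = 2^{[n] \setminus I}$ (one for each independent $k$-set $I$), reducing to the analysis of the nerve whose faces are collections of independent $k$-sets that fail to jointly cover $[n]$; the resulting nerve complex is more symmetric and may admit a cleaner Morse matching, in parallel with Bj\"{o}rner--de Longueville's treatment of $\mathcal{N}(SG(n,k))$ recalled in Theorem~\ref{Bjorner}.
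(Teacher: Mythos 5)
Your handling of the easy cases is fine: for $n<2k$ the independence number $\lfloor n/2\rfloor<k$ indeed forces the void complex, and for $n=2k$ the complex is exactly the disjoint union of the two $(k-1)$-simplices on the odd and even classes, hence $\mathbb{S}^0$. But the heart of the theorem is the case $n>2k$, and there your argument is only a plan, not a proof. You never actually define the matching: the rule ``pair $\sigma$ with $\sigma\triangle\{v\}$ for a canonical vertex $v$ determined by the lexicographically smallest witness'' leaves unspecified exactly the data on which everything depends, and you do not verify acyclicity, do not identify the unmatched cells, and do not show there are precisely two of them in dimensions $0$ and $n-2k$. As you yourself note, this is the main obstacle; naive witness-based pairings of this kind typically either break acyclicity (the witness can change when $v$ is added or removed) or leave many extra critical cells, so the claim that ``the unmatched cells should then reduce to two rigid configurations'' is precisely what has to be proved. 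Your fallback suggestion also has a gap: the nerve of the cover of $\Delta_k^t(C_n)$ by its facets $2^{[n]\setminus I}$ is the complex on the stable $k$-sets whose faces are the collections $\{I_1,\dots,I_s\}$ with $\bigcup_j I_j\neq[n]$; this is \emph{not} the neighborhood complex $\mathcal{N}(SG(n,k))$, whose faces require a common stable $k$-set disjoint from all the $I_j$, so the intended parallel with Bj\"orner--de Longueville does not come for free.

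For comparison, this paper does not reprove the statement by a Morse matching at all; it quotes it from Bayer et al.\ and, in Section~\ref{sec3:nerve}, gives an independent route in the opposite direction to the one you propose: it covers $\mathcal{N}(SG(n,k))$ by the subcomplexes $A_i$ consisting of faces all of whose vertices avoid the element $i$, proves via the Multicone Lemma \ref{Multicone Lemma} that every nonempty intersection $A_{i_1}\cap\cdots\cap A_{i_m}$ is collapsible (Proposition \ref{good-cover}), identifies the nerve of this cover with $\Delta_k^t(C_n)$ (Proposition \ref{nerve-total-cut}), and then invokes Theorem \ref{Bjorner} to get $\Delta_k^t(C_n)\simeq\mathcal{N}(SG(n,k))\simeq\mathbb{S}^{n-2k}$. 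If you want to salvage your outline, either carry out a complete Morse matching with full verification (this is essentially what the cited source does), or redo the nerve argument in the paper's direction, where the contractibility of the intersections is actually provable; the crucial step you would need to supply is the collapsibility argument playing the role of Proposition \ref{good-cover}.
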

Until then, it seems that no relation could be built between neighborhood complexes and total cut complexes. However, an observation by Florian Frick helped us realize that the total cut complex of the cycle graph and the neighborhood complex of the Stable Kneser graph $SG(n,k)$ have the same homotopy type. The following theorem was first proved by Mark Denker and Lei Xue, and we provide a formal proof in this article.
\begin{theorem}\label{Motivation-shen}
    Let $SG(n,k)$ be the stable Kneser graph for $n\ge1,k\ge1$. The $k$-total cut complex of the $n$-cycle is a \emph{nerve complex} of a good cover of the neighborhood complex of $SG(n,k)$, namely $\mathcal{N}(SG(n,k))$. Hence by nerve lemma we have $$\Delta_k^t(C_{n})\simeq\mathcal{N}(SG(n,k))$$
\end{theorem}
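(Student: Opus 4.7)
The plan is to exhibit an explicit good cover $\{U_i\}_{i\in[n]}$ of $\mathcal{N}(SG(n,k))$ whose nerve is precisely $\Delta_k^t(C_n)$, and then to invoke the Nerve Lemma. For each $i\in[n]$, I define $U_i$ to be the subcomplex of $\mathcal{N}(SG(n,k))$ induced on the vertex set $V_i:=\{S\in V(SG(n,k)) : i\notin S\}$, i.e.\ the $2$-stable $k$-subsets of $[n]$ avoiding $i$. The cover property $\bigcup_i U_i=\mathcal{N}(SG(n,k))$ is immediate: any face $\sigma=\{S_1,\ldots,S_m\}$ has a common neighbor $T$, which is a $2$-stable $k$-subset disjoint from every $S_j$, so $\sigma\in U_i$ for every $i\in T$.

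The second step is to identify the nerve. For $I=\{i_1,\ldots,i_r\}\subseteq[n]$, the intersection $\bigcap_j U_{i_j}$ contains a vertex iff there exists a $2$-stable $k$-subset $S$ with $I\cap S=\emptyset$; such an $S$ is automatically a vertex of $\mathcal{N}(SG(n,k))$ for $n\ge 2k$, via a short arc-counting lemma showing that $[n]\setminus S$ always contains a $2$-stable $k$-subset (each of the $k$ arcs of $C_n\setminus S$ has length at least $1$, so contributes at least one element to a maximum independent set). Hence the nerve condition reduces to ``$[n]\setminus I$ contains a $2$-stable $k$-subset'', which is precisely $I\in\Delta_k^t(C_n)$.

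The main obstacle is to verify the cover is good, i.e.\ that each non-empty intersection $\bigcap_j U_{i_j}=\mathcal{N}(SG(n,k))|_{V_I}$ is contractible, where $V_I:=\{S : S\cap I=\emptyset\}$. My plan is to exhibit, for each admissible $I$, a canonical $2$-stable $k$-subset $S^*\in V_I$ that serves as a cone vertex: for every face $\sigma$ of the restricted complex, the augmented face $\sigma\cup\{S^*\}$ still admits a common neighbor in $SG(n,k)$. A natural candidate is to place $S^*$ inside a maximal arc of $C_n\setminus I$, so that any common neighbor of $\sigma$ can be modified to avoid $S^*$ by a local swap along that arc. If a single cone point is insufficient in some configurations, the fallback is to construct a discrete Morse matching on $\mathcal{N}(SG(n,k))|_{V_I}$ with a unique critical cell, driven by a cyclic-order sweep of $[n]\setminus I$; this aligns with the paper's announced use of Morse matchings.

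Once the cover is shown to be good with nerve $\Delta_k^t(C_n)$, the Nerve Lemma yields $\mathcal{N}(SG(n,k))\simeq\Delta_k^t(C_n)$, which is the assertion of the theorem.
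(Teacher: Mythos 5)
Your cover is genuinely different from the paper's: you take $U_i$ to be the induced subcomplex on the vertices (stable $k$-sets) avoiding $i$, whereas the paper covers $\mathcal{N}(SG(n,k))$ by the unions of the full simplices $\Delta_v=\{w:\ w\cap v=\varnothing\}$ over common neighbors $v$ with $i\notin v$. Your covering step and your identification of the nerve with $\Delta_k^t(C_n)$ are fine (both covers have the same nerve). The gap is in the only hard part, goodness of the cover, and your primary plan --- a single cone vertex $S^*$ --- provably fails once $k\ge 3$. A cone apex must be a vertex lying in \emph{every} facet of the restricted complex. Take $n=9$, $k=3$, $I=\{1\}$, and let $V_I$ be the stable $3$-sets avoiding $1$. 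Each of $\Delta_{\{1,3,5\}}\cap V_I$, $\Delta_{\{1,4,7\}}\cap V_I$, $\Delta_{\{1,6,8\}}\cap V_I$ is a facet of the induced complex: in each case the stable $3$-sets disjoint from the given $v$ cover all of $[9]\setminus v$, so any common neighbor of that whole face must equal $v$, forcing maximality. A vertex contained in all three facets would be a stable $3$-set disjoint from $\{1,3,5\}\cup\{1,4,7\}\cup\{1,6,8\}=\{1,3,4,5,6,7,8\}$, i.e.\ contained in $\{2,9\}$, which is impossible. More generally, for $I=\{1\}$ the facet-defining common neighbors include many stable $k$-sets through $1$, whose union exhausts $[n]\setminus\{2,n\}$, so no apex exists for any $k\ge 3$; the ``local swap along a maximal arc'' cannot repair this, since by maximality a facet not containing $S^*$ can never be enlarged by $S^*$.

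That leaves only your fallback --- a discrete Morse matching on each $\mathcal{N}(SG(n,k))|_{V_I}$ with a unique critical cell, ``driven by a cyclic-order sweep'' --- which is exactly the technical heart of the theorem and is neither constructed nor verified; indeed contractibility of your intersections is not established at all (it may well hold, but it needs proof). For comparison, the paper proves goodness for its cover by ordering the stable $k$-sets $v$ avoiding $I$ lexicographically and applying the Multicone Lemma, matching the layer of faces whose smallest admissible common neighbor is $v$ with the shifted vertex $w=v+1$ (each element increased by one modulo $n$); that argument exploits the fact that its cover elements are unions of full simplices $\Delta_v$, and it does not transfer verbatim to your induced subcomplexes. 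To complete your route you would have to produce an explicit collapsing or matching scheme for $\mathcal{N}(SG(n,k))|_{V_I}$ for arbitrary admissible $I$, or else switch to the paper's cover, where the shift trick works.
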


This surprising connection suggests that the topology of total cut complexes may have some relationships with the neighborhood complexes of the \emph{induced $k$-independent graph}.

In this paper, we first investigate the relationship between the total cut complex of a graph $G$ and the neighborhood complex of its induced $k$-independent graph $H_k$. Given a graph \( G \), construct a new graph \( H_k \), called the induced $k$-independent graph of $G$, whose vertices are independent sets of size \( k \) in \( G \), with edges whenever these sets are disjoint. Notice that if we identify $G$ by $n$-cycle $C_n$, then $H_k$ is exactly $SG(n,k)$, the stable Kenser graph. We show that for many families of graphs, the homotopy type of the neighborhood complex is of interest in its own right.

The structure of this paper is as follows: 

\begin{itemize}
    \item Section \ref{sec2:preli} introduces some basic definitions and results related to simplicial complexes and graphs in general, and to topological tools.
    \item Section \ref{sec4:neighbor} discusses the neighborhood complexes of several families of graphs and computes their homotopy types, such as prisms over complete graphs $G_n$, the circular ladder graphs $CL_n$, and the squared cycle graph $W_{3k+1}$.
\end{itemize}

\vspace{4em}

\section{Preliminaries}\label{sec2:preli}
We begin by recalling definitions of simplicial complexes and Kneser graphs. We refer readers to \cite{TopMeth}, \cite{Jonsson_2007}, and \cite{Lovasz_1978} for more details.
\subsection{Simplicial Complexes}
\begin{df}
A \emph{simplicial complex} $\Delta$ on a set $A$ is a collection of subsets of $A$ satisfying:
\[
\sigma \in \Delta \text{ and } \tau \subseteq \sigma \Rightarrow \tau \in \Delta.
\]
The elements of $\Delta$ are called its \emph{faces} or \emph{simplices}. If $\Delta$ contains no faces, it is called the \emph{void complex}. Otherwise, $\Delta$ always includes the empty set as a face. The \emph{dimension} of a face $\sigma$, denoted $\dim(\sigma)$, equals one less than its cardinality. A \emph{facet} is a maximal face of the complex. The dimension of a simplicial complex, $\dim(\Delta)$, is the maximum dimension of its facets. We will focus on \emph{pure} simplicial complexes, where all facets share the same dimension.
\end{df}

We will make use of the following operations:
\begin{df}\cite{Kozlov_2008}\label{operations}
    Let $\Delta$ be a simplicial complex and $\sigma$ a face of $\Delta$.
    \begin{itemize}
        \item The \emph{join} of two simplicial complexes $\Delta_1$ and $\Delta_2$ with disjoint vertex sets is the complex \begin{center}{$\Delta_1 * \Delta_2= \{\sigma\cup \tau: \sigma\in \Delta_1, \tau\in \Delta_2\}.$}\end{center}
        \item The \emph{cone} $\mathcal{C}_v(\Delta)$, with cone point $v$, over $\Delta$, and the \emph{suspension} of $\Delta$ are the complexes 
\begin{center}{$\mathcal{C}_v(\Delta)=\Delta* \Gamma_1, \ 
\mathrm{susp}(\Delta)=\Delta*\Gamma_2,$}\end{center}
where $\Gamma_1$ is the 0-dimensional simplicial complex $\{v\}$ with one vertex $v\notin \Delta$, and $\Gamma_2$ is the 0-dimensional complex with two vertices $u,v\notin \Delta$. 
\item The \emph{link} of $\sigma$ in $\Delta$ is 
    $$\lk_{\Delta} \sigma= \{\tau \in \Delta \mid \text{$\sigma\cap \tau 
      = \varnothing$, and $\sigma\cup \tau \in \Delta$}\}.$$

    \end{itemize}
\end{df}

\begin{df}
Let $G = (V,E)$ be a graph. A set $S\subseteq V$ is an \emph{independent set} if and only if no pair of vertices in $S$ forms an edge of~$G$. The {\em independence number} $\alpha(G)$ of $G$ is the cardinality of a maximum independent set in~$G$.
\end{df}
\begin{df}
Given a graph $G=(V,E)$.
\begin{itemize}
    \item The \emph{total cut complex} $\Delta_k^t(G)$ is the simplicial complex whose facets are the complements of independent sets of size $k$ in graph $G$.
    \item The \emph{neighborhood complex} of $G$ is the simplicial complex on the vertex set \( V \) whose simplices are given by sets of vertices that have a common neighbor. We will denote the neighborhood complex of a graph $G$ by $\mathcal{N}(G)$.
\end{itemize}
    
\end{df}

\subsection{Kneser Graphs}
\begin{df}
Given $n\ge k\ge 1$.
\begin{itemize}

    \item The \emph{Kneser graph} $KG(n,k)$ is the graph with vertices the $k$-subsets of $[n]:=\set{1,2,\ldots,n}$; two of vertices are joined by an edge iff they are disjoint, see Figure \ref{fig_KG(5,2)} for $KG(5,2)$ as a example.
    
    \item The vertices of the \emph{stable Kneser graph} \( SG(n,k) \) (introduced in 1978 by A.~Schrijver) are independent sets of size $k$ in the $n$-cycle graph $C_n$; two of them are joined by an edge iff they are disjoint. Notice that \( SG(n,k) \) is an induced subgraph of \( KG(n,k) \),and with respect to the chromatic number, it is vertex critical, see \cite{Schrijver_1978}.
 
\end{itemize}
\end{df}
\begin{figure}
    \centering
    \includegraphics[width=0.5\linewidth]{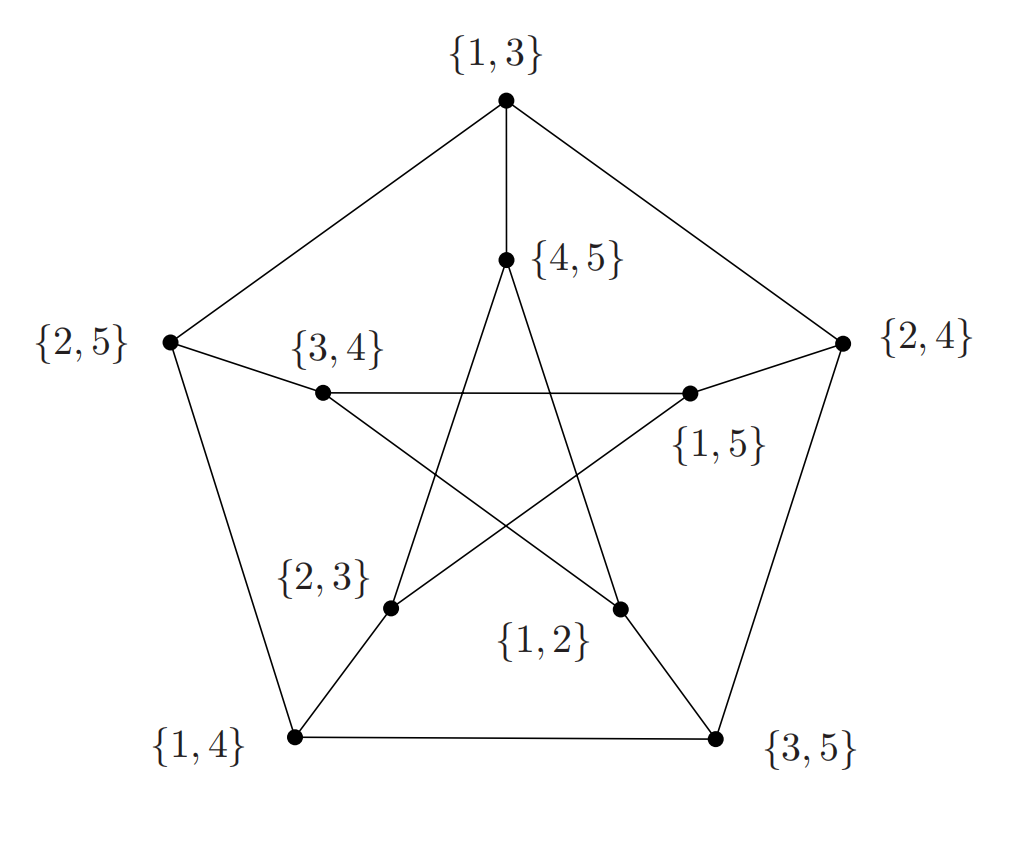}
    \caption{the Kneser graph $KG(5,2)$}
    \label{fig_KG(5,2)}
\end{figure}
The neighborhood complex of \( SG(n,k) \) is hence given by
$$
\mathcal{N}(SG(n,k)) = \left\{ \{I_1, \ldots, I_l\} \subseteq \mathcal{I}_k(C_n) : \exists I  \text{ with } I_j \cap I = \varnothing \ \forall j \right\}
$$
    i.e., the faces of \( \mathcal{N}(SG(n,k)) \) are given by any family of independent \( k \)-sets \( v_i \) in the complement of an independent \( k \)-set \( v \). Sometimes we also refer to a stable $k$-set instead of an independent $k$-set.
\subsection{Topological Tools}
With the basic definitions in place, we now introduce several standard theorems and useful lemmas that will be essential for the proofs that follow.
\begin{df}\cite{BJORNER_2003nerves}\label{df-nerve}
    Let \( \mathcal{A} = \{A_1, A_2, \ldots, A_n\} \) be a family of sets. The \emph{nerve} of \( \mathcal{A} \) records the "intersection pattern" of \( \mathcal{A} \). It is the simplicial complex with the vertex set \([n]\) and with the simplices given by
    \[
\mathfrak{N}(\mathcal{A}) = \left\{ F \subseteq [n]: \bigcap_{i \in F} A_i \neq \varnothing \right\}.
    \]
\end{df}
\begin{lemma}[Nerve Lemma]\cite{TopMeth,BJORNER_2003nerves}\label{Nerve Lemma}
Let $\Delta$ be a simplicial complex and $(\Delta_i)_{i \in I}$ a family of subcomplexes such that $\Delta = \bigcup_{i \in I} \Delta_i$ and every finite nonempty intersection $\Delta_{i_1} \cap \cdots \cap \Delta_{i_s}$ is contractible. Then the nerve complex
\[
\mathfrak{N}(\Delta_i) := 
\left\{ 
\sigma \subseteq I : \sigma \text{ finite}, \bigcap_{i \in \sigma} \Delta_i \neq \varnothing
\right\}
\]
is homotopy equivalent to $\Delta$.
\end{lemma}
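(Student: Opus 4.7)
The plan is to follow the classical Mayer--Vietoris blowup (or homotopy-colimit) argument. First I would construct an intermediate CW complex
\[
X = \coprod_{\sigma \in \mathfrak{N}(\Delta_i)} \left( \bigcap_{i \in \sigma} \Delta_i \right) \times |\sigma| \; \Big/ \sim,
\]
where the relation identifies, for each face inclusion $\tau \subseteq \sigma$ of simplices of the nerve, a point $(x,t)$ in the $\sigma$-block with the same point $(x,t)$ in the $\tau$-block whenever $t \in |\tau| \subseteq |\sigma|$. This space $X$ carries two coordinate projections $p_1 \colon X \to \Delta$ (forget the nerve coordinate) and $p_2 \colon X \to |\mathfrak{N}(\Delta_i)|$ (forget the $\Delta$-coordinate), and the proof reduces to showing that both are homotopy equivalences.

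The fiber analysis is easy. For $p_2$, the preimage of the open simplex $|\sigma|^{\circ}$ is $\bigl(\bigcap_{i \in \sigma} \Delta_i\bigr) \times |\sigma|^{\circ}$, whose first factor is contractible by hypothesis. For $p_1$, the preimage of a point $x \in \Delta$ is the geometric realization of $\{\sigma \in \mathfrak{N}(\Delta_i) : x \in \bigcap_{i \in \sigma} \Delta_i\}$, which equals the full simplex on $\{i : x \in \Delta_i\}$ and is therefore contractible. One then globalizes these pointwise facts by induction on the skeleton of the target: at each cell attachment the preimage deformation retracts onto the fiber, the inclusion of the boundary-preimage into the cell-preimage is a cofibration, and the gluing lemma promotes the homotopy equivalence through the attachment.

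The main obstacle is precisely this last globalization step, which most textbook presentations treat compactly while quietly invoking some nontrivial machinery (essentially the standard fact that a map of CW complexes whose restriction over each closed cell is a homotopy equivalence is itself a homotopy equivalence). For the case of a finite cover an alternative and perhaps slicker route is induction on $|I|$ via Mayer--Vietoris: the base case $|I|=1$ is trivial since both sides are contractible, and the inductive step splits both $\Delta$ and $\mathfrak{N}(\Delta_i)$ into compatible pieces whose pairwise intersections fall under the inductive hypothesis, so that the five-lemma applied to the two Mayer--Vietoris long exact sequences concludes. Either approach yields $\Delta \simeq X \simeq |\mathfrak{N}(\Delta_i)|$, completing the proof.
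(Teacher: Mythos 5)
The paper does not prove this lemma at all: it is quoted verbatim from the cited sources (Bj\"orner's survey and the Bj\"orner nerve paper), so there is no internal proof to compare against. Your main route is exactly the standard argument from those sources: form the blowup (homotopy colimit) $X$ of the diagram of intersections, and show the two projections $p_1\colon X\to\Delta$ and $p_2\colon X\to|\mathfrak{N}(\Delta_i)|$ are homotopy equivalences. Your fiber computations are correct --- $p_1$ has point preimages equal to the full simplex on $\{i: x\in\Delta_i\}$, and $p_2$ is controlled by the contractibility of the intersections --- and you correctly locate the real work in the globalization step (skeleton induction plus the gluing lemma, i.e.\ the criterion that a map which is a homotopy equivalence over each closed cell, with cofibration conditions, is a homotopy equivalence). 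Note that $p_1$ is an equivalence for any cover by subcomplexes, independently of the contractibility hypothesis, which enters only for $p_2$; being explicit about this would sharpen the write-up.

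The genuine gap is the proposed ``slicker'' alternative by induction on $|I|$ via Mayer--Vietoris and the five lemma. First, the five lemma requires a ladder, i.e.\ an actual map between the two Mayer--Vietoris sequences, and there is no natural map between $\Delta$ and $|\mathfrak{N}(\Delta_i)|$ except the zig-zag through the blowup $X$ you were trying to avoid. Second, even granting such a map, this argument only yields isomorphisms on homology; upgrading to a homotopy equivalence needs Whitehead's theorem, hence control of $\pi_1$, and the nerve lemma has no simple-connectivity hypothesis --- $\Delta$ and the nerve can have arbitrary fundamental group (e.g.\ covers of a wedge of circles or of a surface by contractible subcomplexes), so a homology isomorphism is strictly weaker than the stated conclusion. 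That route should either be deleted or explicitly downgraded to a homology nerve theorem; the blowup argument is the one that actually proves the lemma as stated.
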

\begin{cor}\cite{BJORNER_2003nerves}\label{strong nerve lemma}
Let $\Delta$ be a simplicial complex and $(\Delta_i)_{i \in I}$ a family of subcomplexes such that $\Delta = \bigcup_{i \in I} \Delta_i$ and every finite nonempty intersection $\Delta_{i_1} \cap \cdots \cap \Delta_{i_s}$ is $k-t+1$-connected for $t\geq 1$. Then the nerve complex
\[
\mathfrak{N}(\Delta_i) := 
\left\{ 
\sigma \subseteq I : \sigma \text{ finite}, \bigcap_{i \in \sigma} \Delta_i \neq \varnothing
\right\}
\]
is $k$-connected if and only if $\Delta$ is $k$-connected.
\end{cor}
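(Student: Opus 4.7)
The plan is to deduce the stated corollary from the Nerve Lemma by introducing a blow-up space that mediates between $\Delta$ and $\mathfrak{N}(\Delta_i)$. Define the homotopy colimit
$$Y \;=\; \operatorname{hocolim}_{\sigma \in \mathfrak{N}(\Delta_i)} \Delta_\sigma, \qquad \Delta_\sigma := \bigcap_{i \in \sigma} \Delta_i,$$
realized concretely as the subspace of $|\Delta| \times |\mathfrak{N}(\Delta_i)|$ consisting of pairs $(x,t)$ with $x \in \Delta_\sigma$ whenever $t$ lies in the relative interior of the simplex $\sigma$. It admits two natural projections $p \colon Y \to |\Delta|$ and $q \colon Y \to |\mathfrak{N}(\Delta_i)|$.

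The projection $p$ is a weak homotopy equivalence without any connectivity hypothesis: its fiber over $x \in |\Delta|$ is the nerve of the subfamily $\{\Delta_i : x \in \Delta_i\}$, which is a simplex and hence contractible. This is the standard input already used in the proof of the Nerve Lemma. To analyze $q$, I would filter the target by skeleta. The fiber of $q$ over the relative interior of an $(s-1)$-simplex $\sigma$ is $\Delta_\sigma$, which is $(k-s+1)$-connected by hypothesis. The Leray (Mayer--Vietoris) spectral sequence for the cover reads
$$E^2_{p,q} \;=\; \bigoplus_{\sigma \in \mathfrak{N}^{(p)}(\Delta_i)} H_q(\Delta_\sigma) \;\Longrightarrow\; H_{p+q}(|\Delta|),$$
and for $p + q \le k$ with $q > 0$ the hypothesis forces $H_q(\Delta_\sigma) = 0$, since a $(p+1)$-fold intersection is $(k-p)$-connected. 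Only the $q=0$ row survives in this range, and that row computes $H_*(\mathfrak{N}(\Delta_i))$, so $H_j(|\Delta|) \cong H_j(|\mathfrak{N}(\Delta_i)|)$ for $j \le k$. When $k \ge 1$, a parallel van~Kampen argument on the same filtration gives $\pi_1$-agreement, after which Hurewicz together with Whitehead promote this homological equivalence through dimension $k$ into the required isomorphisms on $\pi_j$.

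Combining the two steps yields $\pi_j(|\Delta|) \cong \pi_j(Y) \cong \pi_j(|\mathfrak{N}(\Delta_i)|)$ for $0 \le j \le k$, whence $\Delta$ is $k$-connected if and only if $\mathfrak{N}(\Delta_i)$ is. The principal obstacle will be the bookkeeping of connectivity at each stage of the skeletal induction for $q$: one must verify that the attaching data contributed by the $p$-simplices of the nerve remains at least $k$-connected, and this is precisely where the graded hypothesis ``$t$-fold intersections are $(k-t+1)$-connected'' is needed — any uniform weakening would break the induction before reaching dimension $k$.
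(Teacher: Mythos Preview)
The paper does not prove this corollary; it is quoted in the preliminaries as a known result from Bj\"orner \cite{BJORNER_2003nerves}, so there is no proof in the paper to compare your proposal against. Your outline --- pass through the homotopy colimit $Y$, use that $p\colon Y\to|\Delta|$ has contractible fibers, then analyze $q\colon Y\to|\mathfrak N(\Delta_i)|$ skeleton by skeleton with the graded connectivity hypothesis --- is essentially the standard argument one finds in that reference.

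One point in your sketch does need tightening. The step ``Hurewicz together with Whitehead promote this homological equivalence \dots\ into the required isomorphisms on $\pi_j$'' is not automatic: Whitehead's theorem needs a map between simply connected spaces, and neither $|\Delta|$ nor $|\mathfrak N(\Delta_i)|$ is assumed simply connected a priori --- the statement is an if-and-only-if about $k$-connectedness. In Bj\"orner's treatment this is bypassed by showing directly that $q$ is a $(k+1)$-connected map, arguing inductively over the skeleta of the nerve and using homotopy excision at each attachment; that yields $\pi_j$-isomorphisms for $j\le k$ without going through homology. Your spectral-sequence route can be completed, but the $\pi_1$ step (your ``parallel van Kampen argument'') must be carried out first and independently, and only then, assuming one side is $1$-connected, does Hurewicz let you read off vanishing of the higher homotopy groups from the homology isomorphism.
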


\begin{df}[Elementary Collapse]\cite{Hatcher_2002}
Let $\Delta$ be a simplicial complex, and let $\sigma, \tau \in \Delta$ be two faces such that:
\begin{enumerate}
    \item $\sigma$ is a \emph{free face} of $\tau$, meaning:
    \begin{itemize}
        \item $\sigma \subsetneq \tau$
        \item $\sigma$ is contained in \emph{no other face} of $\Delta$ except $\tau$
    \end{itemize}
    \item $\dim(\tau) = \dim(\sigma) + 1$
\end{enumerate}
An \emph{elementary collapse} of $\Delta$ is the removal of all faces $\gamma \in \Delta$ such that $\sigma \subseteq \gamma \subseteq \tau$. The resulting subcomplex $\Delta' = \Delta \setminus \{\gamma : \sigma \subseteq \gamma \subseteq \tau\}$ is said to be obtained from $\Delta$ by an elementary collapse.
\end{df}

\begin{df}[Collapse]
A simplicial complex $\Delta$ \emph{collapses} to a subcomplex $\Delta^\prime$ (denoted $\Delta \searrow \Delta'$) if there exists a finite sequence of elementary collapses:
\[
\Delta = \Delta_0 \searrow \Delta_1 \searrow \cdots \searrow \Delta_k = \Delta'
\]
If $\Delta$ collapses to a single vertex, it is called \emph{collapsible}.
\end{df}

\begin{rem}
An elementary collapse is a homotopy equivalence. In particular, if $\Delta \searrow \Delta'$, then $\Delta$ and $\Delta'$ are homotopy equivalent.
\end{rem}

\begin{lemma}[Multicone Lemma]\cite{Bjorner_1999}\label{Multicone Lemma}
Let $\Delta_1 \subseteq \cdots \subseteq \Delta_l = \Delta$ be simplicial complexes, and let $\Delta_0 = \varnothing$. Assume there exist vertices $w_1, \ldots, w_l$ such that for $i = 1, \ldots, l$ the assignment
\[
F \mapsto 
\begin{cases} 
F \cup \{w_i\}, & \text{if } w_i \notin F, \\
F \setminus \{w_i\}, & \text{if } w_i \in F.
\end{cases}
\]
maps $\Delta_i \setminus \Delta_{i-1}$ into itself. Then $\Delta$ is collapsible.
\end{lemma}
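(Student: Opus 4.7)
The plan is to prove the Multicone Lemma by induction on the length $l$ of the filtration. For the base case $l=1$, with $\Delta_0=\varnothing$, the hypothesis asserts that the toggling $F\mapsto F\triangle\{w_1\}$ preserves $\Delta_1$, so every $F\in\Delta_1$ with $w_1\notin F$ satisfies $F\cup\{w_1\}\in\Delta_1$. This is precisely the statement that $w_1$ is a cone apex of $\Delta_1$, and a cone is collapsible onto its apex vertex by a standard simplex-by-simplex collapse.

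For the inductive step, I would assume the result for filtrations of length $l-1$. The truncated chain $\Delta_1\subseteq\cdots\subseteq\Delta_{l-1}$ with vertices $w_1,\dots,w_{l-1}$ satisfies the lemma's hypotheses, so by induction $\Delta_{l-1}$ is collapsible; hence it suffices to exhibit a collapse $\Delta_l\searrow\Delta_{l-1}$. By hypothesis, toggling by $w_l$ partitions $\Delta_l\setminus\Delta_{l-1}$ into pairs $\{F,\,F\cup\{w_l\}\}$ with $w_l\notin F$, both members of which lie in the layer. I propose to remove these pairs by elementary collapses, processing them in order of \emph{decreasing} cardinality of the larger face $F\cup\{w_l\}$, with ties broken arbitrarily.

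The key verification is that when a pair $(F,F\cup\{w_l\})$ with $|F\cup\{w_l\}|=k$ is reached, $F$ is a free face of $F\cup\{w_l\}$ in the current complex, i.e., $F\cup\{w_l\}$ is the unique coface of $F$ and is itself a facet. Two observations drive this. First, since $\Delta_{l-1}$ is downward closed, any coface of $F$ in $\Delta_l$ different from $F\cup\{w_l\}$ must itself lie in $\Delta_l\setminus\Delta_{l-1}$ (else $F$ would descend into $\Delta_{l-1}$, contradicting $F\in\Delta_l\setminus\Delta_{l-1}$). Second, a short case analysis on whether $w_l$ belongs to such a coface $G$ shows that the pair containing $G$ has larger face of cardinality strictly greater than $k$, and was therefore removed earlier in the schedule; the same reasoning applied to any $H\supsetneq F\cup\{w_l\}$ shows that $F\cup\{w_l\}$ is a facet of the current complex. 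One must also check that distinct pairs of the same size do not obstruct one another, which reduces to a cardinality argument forcing $w_l$ to land in the wrong face if it did.

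The principal obstacle is precisely this bookkeeping: verifying that the decreasing-cardinality schedule really clears every potential obstruction to the free-face condition before each collapse is performed. The crucial structural input is the downward-closure of $\Delta_{l-1}$, which traps cofaces of layer-$l$ faces within the layer, combined with the observation that such cofaces always pair with something strictly larger. Together these ingredients turn the local pairing hypothesis into a valid global collapse schedule $\Delta_l\searrow\Delta_{l-1}$, and the outer induction on $l$ completes the proof.
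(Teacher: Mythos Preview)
The paper does not give a proof of the Multicone Lemma; it is stated in the preliminaries with a citation to \cite{Bjorner_1999} and used as a black box in the proof of Proposition~\ref{good-cover}. So there is no ``paper's own proof'' to compare against.

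That said, your plan is correct and is the standard argument. The induction on $l$ together with the layer-by-layer collapse $\Delta_l\searrow\Delta_{l-1}$ is exactly how one proves this lemma, and your decreasing-cardinality schedule on the pairs $(F,F\cup\{w_l\})$ is the right ordering. Your case analysis checks out: any coface $G\supsetneq F$ with $G\neq F\cup\{w_l\}$ lies in the layer (downward closure of $\Delta_{l-1}$), and whether or not $w_l\in G$, the larger face of $G$'s pair has size strictly greater than $|F\cup\{w_l\}|$, so it was removed earlier; the same reasoning disposes of cofaces of $F\cup\{w_l\}$. The tie-breaking observation is also correct: two pairs with larger faces of the same size $k$ cannot obstruct each other, since the only size-$k$ coface of $F$ containing $w_l$ is $F\cup\{w_l\}$ itself. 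One small point you might make explicit in a full write-up is the base case: $\Delta_1$ is a cone over $w_1$, and the collapse of a cone to its apex uses exactly the same decreasing-cardinality schedule on pairs $(\sigma,\sigma\cup\{w_1\})$ for nonempty $\sigma$ not containing $w_1$.
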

We will consider reduced simplicial homology \cite{Hatcher_2002} over the integers \(\mathbb{Z}\). Denote by \(\widetilde{H}_{i}(\Delta)\) the \(i\)-th reduced homology of the simplicial complex \(\Delta\). The following theorem is a fundamental result in algebraic topology that connects the homology (or cohomology) of a space to the homology of its suspension.
\begin{theorem}\cite{TopMeth,Hatcher_2002}\label{suspension-iso}
    Let \(\Delta_{1}\) and \(\Delta_{2}\) be finite complexes. Assume that at least one of \(\widetilde{H}_{p}(\Delta_{1})\), \(\widetilde{H}_{q}(\Delta_{2})\) over \(\mathbb{Z}\) is torsion-free when \(p+q=r-1\). Then the reduced homology of the join \(\Delta_{1}*\Delta_{2}\) in degree \(r\) is given by
\[
\widetilde{H}_{r}(\Delta_{1}*\Delta_{2})\cong\bigoplus_{p+q=r-1}\left(\widetilde {H}_{p}(\Delta_{1})\otimes\widetilde{H}_{q}(\Delta_{2})\right).
\]
In particular, when the appropriate homology groups are torsion-free, the Kunneth Theorem confirms the well-known group isomorphism.

\[
\widetilde{H}_{r}(\operatorname{susp}(\Delta))\cong\widetilde{H}_{r-1}(\Delta).
\]
\end{theorem}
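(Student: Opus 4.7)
The plan is to pass from the topology of the join to a chain-level identification and then invoke the algebraic K\"unneth theorem. The key ingredient is the classical formula
\[
\widetilde{C}_n(\Delta_1 * \Delta_2) \;\cong\; \bigoplus_{p+q=n-1} \widetilde{C}_p(\Delta_1) \otimes \widetilde{C}_q(\Delta_2),
\]
where $\widetilde{C}_*$ denotes the augmented simplicial chain complex, with $\widetilde{C}_{-1} \cong \mathbb{Z}$ generated by the empty face. The degree shift by one on the join side is exactly what accounts for the $p+q = r-1$ indexing appearing in the statement.

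To establish this chain isomorphism, I would use the fact that every nonempty face of $\Delta_1 * \Delta_2$ is uniquely of the form $\sigma \cup \tau$ with $\sigma \in \Delta_1 \cup \{\varnothing\}$, $\tau \in \Delta_2 \cup \{\varnothing\}$, and $\dim(\sigma \cup \tau) = \dim \sigma + \dim \tau + 1$ under the convention $\dim \varnothing = -1$. Fixing a total order on the vertices that places all vertices of $\Delta_1$ before those of $\Delta_2$, the simplicial boundary of $\sigma \cup \tau$ splits cleanly into a part removing a vertex of $\sigma$ and a part removing a vertex of $\tau$. A direct sign check shows that these match the Koszul convention $\partial(a \otimes b) = \partial a \otimes b + (-1)^{|a|} a \otimes \partial b$ on the tensor complex; this sign bookkeeping is the main technical hurdle, although it is entirely routine.

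With the chain-level isomorphism in hand, applying the algebraic K\"unneth theorem (valid since the simplicial chain groups of finite complexes are free abelian) at degree $r-1$ yields the natural short exact sequence
\[
0 \to \bigoplus_{p+q=r-1} \widetilde{H}_p(\Delta_1) \otimes \widetilde{H}_q(\Delta_2) \to \widetilde{H}_r(\Delta_1 * \Delta_2) \to \bigoplus_{p+q=r-2} \operatorname{Tor}^{\mathbb{Z}}_1\!\bigl(\widetilde{H}_p(\Delta_1), \widetilde{H}_q(\Delta_2)\bigr) \to 0.
\]
The torsion-freeness hypothesis on one of the factors forces every $\operatorname{Tor}_1$ term to vanish, and the sequence collapses to the claimed isomorphism. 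For the suspension formula, I would specialize to $\Delta_2 = S^0$; since $\widetilde{H}_0(S^0) = \mathbb{Z}$ is free and all other reduced homology of $S^0$ vanishes, the torsion-freeness hypothesis is automatic and the direct sum collapses to the single term $\widetilde{H}_{r-1}(\Delta) \otimes \mathbb{Z} \cong \widetilde{H}_{r-1}(\Delta)$, completing the argument.
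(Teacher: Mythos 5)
The paper states this result as known background, citing Bj\"orner's \emph{Topological Methods} and Hatcher, and gives no proof of its own, so there is no internal argument to compare against; your proposal is exactly the standard proof used in those sources: identify the augmented chain complex of the join with the degree-shifted tensor product of the augmented chain complexes (the sign check with the Koszul convention being routine for finite complexes), apply the algebraic K\"unneth theorem, and specialize to $\Delta_2=S^0$ for the suspension isomorphism. In that sense the approach is correct and essentially canonical.

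One point to tighten: in the K\"unneth sequence you write for degree $r-1$ of the tensor complex, the $\operatorname{Tor}$ summand is indexed by pairs with $p+q=r-2$, while the torsion-freeness hypothesis in the statement is phrased for pairs with $p+q=r-1$. As written, your sentence ``the torsion-freeness hypothesis forces every $\operatorname{Tor}_1$ term to vanish'' does not literally follow; to make the step precise you should apply the torsion-freeness condition to the pairs that actually index the $\operatorname{Tor}$ summand (which is how the hypothesis is intended in the cited references). For the suspension formula this is immaterial, since $\widetilde{H}_*(S^0)$ is free and concentrated in degree $0$, so all $\operatorname{Tor}$ terms vanish automatically and your final reduction to $\widetilde{H}_{r-1}(\Delta)\otimes\mathbb{Z}\cong\widetilde{H}_{r-1}(\Delta)$ is correct.
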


In the end, we refer the reader to \cite{Jonsson_2007} and \cite{Kozlov_2008} for the necessary background on discrete Morse matchings. The particular
case of element matchings is described in \cite{Singh_2021}; a brief summary appears in [\cite{Bayer_2024_02}, Appendix].
\vspace{4em}

\section{Total Cut Complexes versus Neighborhood Complexes}\label{sec3:nerve}
In this section, we prove Theorem \ref {Motivation-shen}, which provides our first motivating example of the correspondence between total cut complexes and neighborhood complexes.
\begin{theorem}\label{motivation-cycles}
    Let $SG(n,k)$ be the stable Kneser graph for $n\ge1,k\ge1$. The $k$-total cut complex of the $n$-cycle is a \textbf{nerve complex} of a good cover of $\mathcal{N}(SG(n,k))$. Hence, by the nerve lemma we have $$\Delta_k^t(C_{n})\simeq\mathcal{N}(SG(n,k))\simeq \mathbb{S}^{n-2k}$$
\end{theorem}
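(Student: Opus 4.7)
The plan is to exhibit an explicit good cover of $\mathcal{N}(SG(n,k))$ indexed by $[n]$ whose nerve coincides combinatorially with $\Delta_k^t(C_n)$; the Nerve Lemma (Lemma~\ref{Nerve Lemma}) will then yield $\mathcal{N}(SG(n,k))\simeq \Delta_k^t(C_n)$, and Theorem~\ref{Bjorner} identifies this common homotopy type with $\mathbb{S}^{n-2k}$. Concretely, for each $v\in[n]$ I would define $X_v$ to be the induced subcomplex of $\mathcal{N}(SG(n,k))$ on the vertex set $\{I\in\mathcal{I}_k(C_n): v\notin I\}$, so that a simplex $\{I_1,\ldots,I_l\}$ lies in $X_v$ iff no $I_j$ contains $v$.

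The structural properties are quick to check. Any simplex $\sigma=\{I_1,\ldots,I_l\}\in\mathcal{N}(SG(n,k))$ has a common neighbor $J\in\mathcal{I}_k(C_n)$, and any choice of $v\in J$ yields $v\notin I_j$ for all $j$, so $\sigma\in X_v$; hence $\bigcup_{v\in[n]} X_v=\mathcal{N}(SG(n,k))$. For any $S\subseteq[n]$ the intersection $Y_S:=\bigcap_{v\in S}X_v$ is the induced subcomplex on $\{I\in\mathcal{I}_k(C_n): I\subseteq[n]\setminus S\}$, which contains a vertex precisely when $[n]\setminus S$ admits a stable $k$-set---exactly the defining condition for $S$ to be a face of $\Delta_k^t(C_n)$. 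Therefore $\mathfrak{N}(\{X_v\}_{v\in[n]})=\Delta_k^t(C_n)$ on the nose.

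The main obstacle is verifying the good-cover hypothesis: each non-empty $Y_S$ must be contractible. A naive cone-apex argument---selecting some $I^*\in\mathcal{I}_k(C_n)$ with $I^*\subseteq T:=[n]\setminus S$ and claiming $\sigma\mapsto\sigma\cup\{I^*\}$ preserves $Y_S$---fails already at $n=7,\,k=2,\,S=\{1\}$, since adjoining any single $I^*$ can destroy the common-neighbor witness of some $\sigma\in Y_S$. The plan is instead to build a discrete Morse matching on $Y_S$ via the element-matching framework of \cite{Singh_2021} (summarized in the appendix of \cite{Bayer_2024_02}): one linearly orders the stable $k$-subsets of $T$ and, for each simplex $\sigma\in Y_S$, toggles the first admissible $I^*$ in this order, meaning the first $I^*$ for which $\sigma\triangle\{I^*\}$ is again a simplex of $Y_S$. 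Using the arc decomposition of $T\subseteq V(C_n)$ and the cyclic symmetry of $SG(n,k)$, the goal is to argue that this matching has a single critical cell (a $0$-face), so that $Y_S$ collapses to a point. Once contractibility of the intersections is established, the Nerve Lemma delivers $\Delta_k^t(C_n)\simeq \mathcal{N}(SG(n,k))$, and Theorem~\ref{Bjorner} closes the argument.
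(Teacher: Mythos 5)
Your overall scheme (a cover of $\mathcal{N}(SG(n,k))$ indexed by $[n]$, whose nerve is checked to be $\Delta_k^t(C_n)$, followed by the Nerve Lemma and the identification of the common homotopy type with $\mathbb{S}^{n-2k}$) matches the paper's strategy, and your nerve computation and covering property are correct for the cover you chose. Note, however, that your cover is not the one the paper uses: you take $X_v$ to be the induced subcomplex on the stable $k$-sets \emph{not containing} the element $v$, whereas the paper's $A_i$ consists of all faces admitting a common neighbor (witness) $w$ with $i\notin w$, with no restriction on the vertices of the face itself. Both covers have the same nerve $\Delta_k^t(C_n)$, but their intersections are genuinely different subcomplexes, so the contractibility question you face is not the one the paper solves.

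The genuine gap is exactly at that point: the good-cover hypothesis for your $Y_S=\bigcap_{v\in S}X_v$ is never established. You correctly observe that the naive cone argument fails, but what replaces it is only a plan: "order the stable $k$-subsets of $T$, toggle the first admissible one, and argue there is a single critical cell." You give no argument that this greedy, face-by-face toggling rule yields an acyclic matching (greedy matchings of this kind are not automatically Morse matchings; the safe construction is a \emph{sequence} of element matchings as in \cite{Singh_2021}), and no argument that all faces except one vertex get matched -- and that count is precisely where all the difficulty of the theorem is concentrated. For comparison, the paper's proof of Proposition~\ref{good-cover} does the analogous work explicitly: its intersection $B$ decomposes into blocks $\Gamma_i\setminus\Gamma_{i-1}$ according to the lexicographically smallest witness $v_i$ avoiding $S$, and the Multicone Lemma is applied with the cyclically shifted set $w_i=\{a_1+1,\dots,a_k+1\}$, which is automatically a vertex of $\Delta_{v_i}$; the verification that toggling $w_i$ preserves each block is a concrete lexicographic argument. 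That trick does not transfer directly to your cover, because for $Y_S$ the toggling vertex would additionally have to avoid $S$, which the shift of a witness need not do. So as written the proposal reduces the theorem to an unproved claim (contractibility of every nonempty $Y_S$); until you either carry out the Morse-matching count for your cover or switch to a cover for which a multicone/collapsibility argument can be made explicit, the proof is incomplete.
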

For fixed $n\geq 1, k\geq 1$ and $1\leq i\leq n$, define the subcomplexes
\[
A_i=\{F\subseteq \Delta_v : v \text{ is a vertex of } \mathcal{N}(SG(n,k))  \text{ such that } i\notin v\}
\]
where $\Delta_v=\{w: w\subseteq[n] \text{ stable } k\text{-subset, } v\cap w=\varnothing \}$.

Then we see that $\{A_i\}_{i=1}^n$ forms an open cover of $\mathcal{N}(SG(n,k))$:
\[
\mathcal{N}(SG(n,k))=\bigcup_{i=1}^n A_i
\]
\begin{prop}\label{good-cover}
    For all $n,k\ge 1$, every nonempty intersection $A_{i_1}\cap A_{i_2}\cap\cdots\cap A_{i_m}$ is contractible. 
\end{prop}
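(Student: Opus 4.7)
The plan is to fix a canonical stable $k$-subset $v^{\ast}$ associated to the index set $S = \{i_1, \ldots, i_m\}$ and to show that $v^{\ast}$ serves as a cone apex for $\bigcap_{j=1}^m A_{i_j}$; contractibility is then automatic. The cover structure and the definition of $\Delta_v$ unfold nicely into an explicit combinatorial membership criterion, which I would take as the starting point.

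First I would translate membership in the intersection into combinatorics. A face $F = \{w_1, \ldots, w_\ell\}$ of $\mathcal{N}(SG(n,k))$ lies in $\bigcap_j A_{i_j}$ precisely when, writing $W_F := w_1 \cup \cdots \cup w_\ell$, the complement $[n] \setminus W_F$ contains, for each $i_j \in S$, a stable $k$-subset of $C_n$ satisfying the condition relating $i_j$ to $v$ from the definition of $A_{i_j}$. Since the intersection is assumed nonempty, Theorem~\ref{motivation-cycles} together with the identification of the nerve as $\Delta_k^t(C_n)$ tells us that $[n] \setminus S$ itself contains a stable $k$-subset of $C_n$, and I would fix such a $v^{\ast}$ once and for all (for concreteness, the lexicographically smallest one).

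The core of the proof is then to establish two claims: (i) $v^{\ast}$ is a vertex of $\bigcap_j A_{i_j}$, and (ii) for every face $F$ of the intersection, the face $F \cup \{v^{\ast}\}$ is again in the intersection. Given (i) and (ii), the intersection is a simplicial cone with apex $v^{\ast}$ and hence contractible. Claim (i) follows immediately from the construction of $v^{\ast}$. For (ii), one must produce, for each $i_j$, a stable $k$-subset of $[n] \setminus (W_F \cup v^{\ast})$ meeting the $A_{i_j}$-condition. Starting from an existing witness $v^{(j)}$ for $F \in A_{i_j}$, I would modify $v^{(j)}$ by local shifts along the cycle whenever it meets $v^{\ast}$, exploiting the $n - k$ unused positions on $C_n$ outside $v^{\ast}$ to relocate problematic elements while preserving both stability and the requirement concerning $i_j$.

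The main obstacle is precisely step (ii): the shifts must be carried out so that the stability of each $v^{(j)}$, the $i_j$-condition, and disjointness from $W_F \cup v^{\ast}$ are all preserved simultaneously, and in boundary regimes such as $n$ close to $2k$ the cyclic slack is minimal and requires careful bookkeeping. If the direct cone argument becomes unwieldy in these extremal cases, the natural fallback is a discrete Morse / Multicone Lemma argument (Lemma~\ref{Multicone Lemma}): filter $\bigcap_j A_{i_j}$ by the subcomplexes $\Delta_U$ where $U$ ranges over minimal unions of stable $k$-sets covering $S$, ordered by a suitable rank function, and at each stage toggle membership of the designated vertex $v^{\ast}$; this produces a collapsing sequence certifying collapsibility and hence contractibility.
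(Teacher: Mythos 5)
Your core step (ii) is false, and it is the heart of the argument: the intersection need not be a cone over your $v^{\ast}$, or over any vertex. Take $n=6$, $k=2$ and the single subcomplex $A_1$ (so $S=\{1\}$ and your apex is the lexicographically smallest stable $2$-subset of $[6]\setminus\{1\}$, namely $v^{\ast}=\{2,4\}$). The face $F=\{\{1,3\},\{1,5\},\{2,5\},\{3,5\}\}$ lies in $A_1$, with witness $v=\{4,6\}$. But $F\cup\{v^{\ast}\}$ would require a stable $2$-set disjoint from $\{1,3\}\cup\{1,5\}\cup\{2,5\}\cup\{3,5\}\cup\{2,4\}=\{1,2,3,4,5\}$, i.e.\ contained in $\{6\}$, which is impossible; so $F\cup\{v^{\ast}\}$ is not even a face of $\mathcal{N}(SG(6,2))$, and no amount of ``local shifting'' of witnesses can help, since there is simply no room left on the cycle. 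Choosing a different apex cannot repair this: $\Delta_{\{2,4\}}=\{\{1,3\},\{1,5\},\{3,5\},\{3,6\}\}$ and $\Delta_{\{3,5\}}=\{\{1,4\},\{2,4\},\{2,6\},\{4,6\}\}$ are two vertex-disjoint maximal faces of $A_1$ (any $\Delta_{v'}$ containing $\Delta_{\{2,4\}}$ forces $v'=\{2,4\}$, and similarly for $\{3,5\}$), so $A_1$ admits no cone point whatsoever. Incidentally, your claim (i) is also not ``immediate'': $\{v^{\ast}\}\in A_{i_j}$ requires exhibiting a stable $k$-set disjoint from $v^{\ast}$ and avoiding $i_j$, which needs an argument.

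Your fallback does not close the gap either. If you run Lemma~\ref{Multicone Lemma} toggling the \emph{same} fixed vertex $v^{\ast}$ at every stage, then the toggle preserves every stratum, hence preserves their union, which is precisely the statement that the whole complex is a cone with apex $v^{\ast}$ --- the claim refuted above; and the proposed filtration by ``$\Delta_U$ for minimal unions $U$ of stable $k$-sets covering $S$'' is not defined in a way that matches the structure of $\bigcap_j A_{i_j}$. The missing idea is a \emph{stage-dependent} cone point: the paper's proof orders the admissible witnesses $v_1\prec v_2\prec\cdots\prec v_l$ (the stable $k$-sets avoiding $i_1,\dots,i_m$) lexicographically, filters the intersection by $\Gamma_i=\{F\subseteq\Delta_{v_j}:j\le i\}$, and toggles at stage $i$ the shifted vertex $w_i=v_i+1$ (elementwise cyclic shift), which lies in $\Delta_{v_i}$; one then checks that this toggle maps $\Gamma_i\setminus\Gamma_{i-1}$ into itself, so Lemma~\ref{Multicone Lemma} gives collapsibility. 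Without letting the toggle vertex vary with the stratum, the argument cannot work.
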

\begin{proof}
    Suppose $B=A_{i_1}\cap A_{i_2}\cap\dots\cap A_{i_m}$ is nonempty, we show that it is collapsible using the Multicone Lemma \ref{Multicone Lemma}.
    Without loss of generality, we assume that 
    \[B=\{F\subseteq \Delta_v :v\subseteq [n] \text{ is a stable } k\text{-subset such that }i_1,\dots,i_{m-1},i_m=n\notin v\}.\]
    Since $[n]$ is finite, so is the subcomplex $B$. We may order all stable $k$-subsets $v\subseteq [n]$ that do not  contain $i_1,\dots ,i_m$ lexicographically and label them as $v_1\prec v_2\prec\dots\prec v_l$. For each $i=1,2,\dots,l$, define 
    \[\Gamma_i =\{F\subseteq \Delta_{v_j}: 1\leq j\leq i\},\]
    then $\Gamma_0=\varnothing$, $\Gamma_l=B$ and $\Gamma_i\subseteq \Gamma_{i+1}$. 
    
    For $v_i = \{a_{1},\dots, a_{k}\}\subseteq [n]\setminus\{i_1,\dots,i_m=n\}$, we define  
\[ w_i = \{a_{1}+1, \cdots, a_{k}+1\} \subseteq [n] \]

As presented in Figure \ref{fig:vertices}, it is straightforward to check:
\begin{itemize}
    \item $w_i$ is stable $k$-subset.
    \item $w_i \in \Delta_{v_i}$ since $w_i \cap v_i = \varnothing$. This implies $w_i \in \Gamma_j$ for all $j \geq i$
\end{itemize}

\begin{figure}[htbp]
\centering

\begin{tikzpicture}[scale=1.5, every node/.style={circle, draw, inner sep=2pt}]
    \node [draw,circle,inner sep=2pt,fill=black,label=above:{$1$}] (1) at (0,1){};
    \node [draw,circle,inner sep=2pt,fill=black,label=above right:{$2$}] (2) at (0.3090169944, 0.9510565163){};
    \node [draw,circle,inner sep=2pt,label= above right:{$a_1$}] (3) at (0.5877852523, 0.8090169944){};
    \node [draw,circle,inner sep=2pt,fill=gray,label= right:{$a_1+1$}] (4) at (0.8090169944, 0.5877852523){};
    \node [draw,circle,inner sep=2pt,label= right:{$a_2$}] (5) at (0.9510565163, 0.3090169944){};
    \node [draw,circle,inner sep=2pt,fill=gray,label= right:{$a_2+1$}] (6) at (1,0){};
    \node [draw,circle,inner sep=2pt,fill=black,label= right:{}] (7) at (0.9510565163, -0.3090169944){};
    \node [draw,circle,inner sep=2pt,label= below right:{$a_3$}] (8) at (0.8090169944, -0.5877852523){};
    \node [draw,circle,inner sep=2pt,fill=gray,label= below right:{$a_3+1$}] (9) at (0.5877852523, -0.8090169944){};
    \node [draw=none,fill=none] (10) at (0.3090169944, -0.9510565163){};
    \node [draw,circle,inner sep=2pt,fill=black,label=above left:{$n$}] (11) at (-0.3090169944, 0.9510565163){};
    \node [draw,circle,inner sep=2pt,fill=gray,label=left:{$a_k+1$}] (12) at (-0.5877852523, 0.8090169944){};
    \node [draw,circle,inner sep=2pt,label=left:{$a_k$}] (13) at (-0.8090169944, 0.5877852523){};
    \node [draw,circle,inner sep=2pt,fill=black,label=left:{}] (14) at (-0.9510565163, 0.3090169944){};
    \node [draw=none,fill=none] (15) at (-1,0){};

    \draw (14) -- (13) -- (12) -- (11) -- (1) -- (2) -- (3) -- (4) -- (5) --(6) -- (7) -- (8) -- (9);
    \draw[dashed] (9) -- (10);
    \draw[dashed] (14) -- (15);
\end{tikzpicture}
\caption{The vertices $v_i$ and $w_i$}
    \label{fig:vertices}
\end{figure}
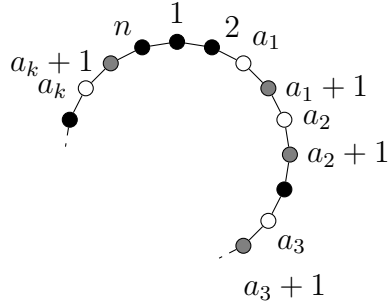

Now it is sufficient to investigate the map defined in the Multicone Lemma.For $i \in \{1, \dots, l\}$ and $F$ a simplex, define  
\[ \phi(F) = 
\begin{cases} 
F \cup \{w_i\}, & \text{if } w_i \notin F, \\
F \setminus \{w_i\}, & \text{if } w_i \in F.
\end{cases} \]

We will verify that  
\(\phi(\Gamma_i \setminus \Gamma_{i-1}) \subseteq \Gamma_i \setminus \Gamma_{i-1}\). If $w_i\notin F$, we map $F$ to $F\cup \{w_i\}$. It is easy to show $F\cup\{w_i\}\in\Gamma_i\setminus\Gamma_{i-1}$. If $w_i\in F$, we map $F$ to $F\setminus\{w_i\}$. In this case $F\setminus\{w_i\}\in \Gamma_i\setminus\Gamma_{i-1}$ for the following reason. Consider the \emph{support} $\operatorname{supp}(F)=\bigcup_{u\in F} u\subseteq [n]$ of $F$. 
The fact that $F\in\Gamma_i\setminus\Gamma_{i-1}$ implies that the lexicographically smallest stable $k$-subset in $[n]\setminus \operatorname{supp}(F)$ is $v_i$. Futhermore, $w_i\in F$ implies that in fact the first $k$ elements of $[n]\setminus\operatorname{supp}(F)$ are given by the set $v_i$. Hence $F\setminus\{w_i\}\in \Gamma_{i-1}$ only if the set $\{a_{i_1},a_{i_1+1},a_{i_2},a_{i_2+1},\dots,a_{i_k},a_{i_k+1}\}$ contains a stable $k$-subset precedes $v_i$ in the lexicographical order. But this is not the case. 

By Multicone Lemma, we conclude that $B=A_{i_1}\bigcap A_{i_2}\bigcap\dots\bigcap A_{i_m}$ is collapsible.

\end{proof}
\begin{prop}\label{nerve-total-cut}
    For $n,k\ge 1$, the nerve complex generated by $\set{A_i}$ is exactly the total cut complex of an $n$-cycle.
\end{prop}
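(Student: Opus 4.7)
The plan is to prove $\mathfrak{N}(\{A_i\}_{i=1}^n)=\Delta_k^t(C_n)$ as simplicial complexes on the vertex set $[n]$ by matching face sets directly. Unwinding the definitions, $S=\{i_1,\ldots,i_m\}\subseteq[n]$ is a face of $\Delta_k^t(C_n)$ iff $[n]\setminus S$ contains a stable $k$-subset of $C_n$, while $S$ is a face of $\mathfrak{N}(\{A_i\})$ iff $A_{i_1}\cap\cdots\cap A_{i_m}\neq\varnothing$. The bridge between the two descriptions is the identity already exploited in the proof of Proposition~\ref{good-cover}:
\[
A_{i_1}\cap\cdots\cap A_{i_m}=\bigcup\bigl\{\Delta_v : v\text{ is a stable }k\text{-subset of }[n]\text{ with }v\cap S=\varnothing\bigr\}.
\]

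Granting this identity, both directions fall out quickly. For the forward implication, if the intersection is nonempty then the identity immediately yields a stable $k$-subset $v$ with $v\cap S=\varnothing$, and then $S\subseteq[n]\setminus v$ is a face of $\Delta_k^t(C_n)$. For the reverse implication, given such a $v$, I would observe that for $n\ge 2k$ the complement of $v$ in $C_n$ still supports a stable $k$-subset $w$, so $\{w\}\in\Delta_v$; since $v$ avoids every $i_j$ we have $\{w\}\in A_{i_j}$ for each $j$, giving a common element of the intersection. The edge case $n<2k$ is handled vacuously, as both complexes are void by Theorem~\ref{thm:total-cut-cycle}.

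The main obstacle is establishing the intersection identity, and in particular the nontrivial containment $\subseteq$: a face $F$ lying in every $A_{i_j}$ a priori only comes with index-wise witnesses $v_j$ (stable $k$-subsets with $i_j\notin v_j$ and $F\subseteq\Delta_{v_j}$), and one must promote these into a single common witness $v$ avoiding all of $S$. I would argue this by following the WLOG setup in the proof of Proposition~\ref{good-cover}, relabeling so that $n\in S$ and using the cyclic structure together with the lexicographic ordering of stable $k$-subsets of $[n]\setminus S$ to collapse the intersection onto the stated union. Once this identity is in hand, the equality of the nerve and the total cut complex follows immediately.
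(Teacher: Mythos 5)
Your reduction of the statement to the intersection identity puts your finger on exactly the right spot, but the step you defer is not a routine adaptation of Proposition~\ref{good-cover} --- it is the whole content, and with $A_i$ as literally defined (a face $F$ lies in $A_i$ whenever it has \emph{some} common neighbor $v$ with $i\notin v$) the containment $\subseteq$ in your identity is false, and so is the nonemptiness criterion your forward implication needs. Concretely, take $n=6$, $k=2$, $S=\set{3,4,5,6}$, and the single-vertex face $F=\set{\set{1,4}}$ of $\mathcal{N}(SG(6,2))$. Then $F\in A_3\cap A_4\cap A_5\cap A_6$, with index-wise witnesses $\set{2,5}$ for $i=3,4,6$ and $\set{2,6}$ for $i=5$; yet no stable $2$-subset of $C_6$ avoids all of $S$, since $[6]\setminus S=\set{1,2}$ is an edge of the cycle. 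So the left-hand side of your identity is nonempty while the right-hand side is void: $S$ would be a face of the nerve under this reading, although $S\notin\Delta_2^t(C_6)$. The obstruction is structural rather than technical: membership in each $A_{i_j}$ only forces $\operatorname{supp}(F)=\bigcup_{w\in F}w$ to miss \emph{some} stable $k$-set avoiding the single index $i_j$, and such witnesses genuinely cannot always be merged into one stable $k$-set avoiding all of $S$; the lexicographic/Multicone bookkeeping of Proposition~\ref{good-cover} never touches this issue, because there one works inside the common-witness complex from the outset. (To be fair, the paper's own proof displays the same common-witness formula for $\bigcap_{i_j}A_{i_j}$ without justification, so your proposal mirrors its approach, gap included; but your plan to close the gap by imitating Proposition~\ref{good-cover} cannot succeed.)

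The way to make the proposition correct is not to prove your identity but to bypass it by reading the cover element-wise: take $A_i$ to be the induced subcomplex of $\mathcal{N}(SG(n,k))$ on the vertices $w$ with $i\notin w$, i.e.\ impose the condition on the \emph{elements} of $F$ rather than on a common neighbor of $F$. Every face avoids the indices lying in one of its common neighbors, so these $A_i$ still cover $\mathcal{N}(SG(n,k))$, and now $\bigcap_{i\in S}A_i$ is simply the induced subcomplex on the stable $k$-sets disjoint from $S$. Its nonemptiness means there is a stable $k$-set $v\subseteq[n]\setminus S$ that is a vertex of the neighborhood complex; for $n\ge 2k$ any stable $k$-set is such a vertex (the shifted set $v+1$ is a disjoint stable $k$-set --- the same trick you use in your reverse implication), and for $n<2k$ both complexes are void as you note. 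This gives $\bigcap_{i\in S}A_i\neq\varnothing$ if and only if $[n]\setminus S$ contains a stable $k$-set, which is exactly $S\in\Delta_k^t(C_n)$, with no witness-merging required.
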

\begin{proof}
The facets of $\Delta_k^t(C_n)$ are complements of independent $k$-subsets of $[n]$. There are exactly $\frac{n}{n-k}\binom{n-k}{k}$ facets.

For an index set $I=\set{i_1,i_2,\dots,i_m}$, we have
 \[
 \bigcap_{i_j\in I}A_{i_j}=\{F\subseteq \Delta_v: v \text{ is a vertex of } \mathcal{N}(SG(n,k))  \text{ such that } i_1,i_2,\cdots,i_{m} \notin v\}
 \]
It is nonempty if and only if there exists a vertex of $\mathcal{N}(H_k)$ such that $i_1,i_2,\dots,i_m\notin v$, which means that $[n]\setminus I$ contains a stable $k$-subset. Meanwhile, this is exactly saying that $I=\set{i_1,\dots,i_m} $ is a face of $\Delta_k^t(C_n)$.
Hence $\Delta_k^t(C_n)$ is a nerve complex $\mathfrak{N}(A_i)$ of open cover $\{A_i\}_{i=1}^{n}$ for $\mathcal{N}(SG(n,k))$.
\end{proof}
Now the \cref{motivation-cycles} can be deduced by Nerve Lemma \ref{Nerve Lemma}, as desired.
\vspace{4em}
\section{Specific Neighborhood Complexes}\label{sec4:neighbor}%syf

Given a graph $G$, construct a new graph $H_k$ whose vertices are independent sets of size $k$ in $G$, with edges whenever these sets are disjoint. We call $H_k$ \emph{induced $k$-independent graph} of $G$.

Based on this surprising connection, we are interested in the relation on the topology between $\mathcal{N}(H_k)$ and $\Delta_k^t(G)$. We show that for many families of graphs, such as prisms over the complete graphs $G_n$, the circular ladder graphs $CL_n$, and the squared cycle graph $W_{3k+1}$, the homotopy type of the neighborhood complex $\mathcal{N}(H_k)$ is of interest in its own right. We begin with the definition of a prism $G_n$ over the complete graph $K_n$.

\begin{df}
    The \emph{prism \( G_n \)} over the complete graph \( K_n \) is the Cartesian product of complete graphs \( K_n \times K_2 \). 
    It has \( 2n \) vertices denoted by \(\{i^+, i^- : 1 \leq i \leq n\}\), and edges $\{i^+ j^+\}_{1 \leq i < j \leq n} ,\{i^- j^-\}_{1 \leq i < j \leq n},\{i^+ i^-\}_{ 1 \leq i \leq n}$.
\end{df}
     When $k>2$, $H_k$ is void. For $k=2$, $H_k$ has vertices $\{i^+j^-\}_{i\neq j}$ with edges whenever these vertices are disjoint. Consider the neighborhood complex $\mathcal{N}(H_2)$, there are $n(n-1)$ facets of dimension $n^2-3n+2$ in the form 
     \[
     F_{i^+j^-}=\{l^+k^-:l\neq i,k\neq j,l\neq k,1\leq l,k\leq n\}\quad 1\leq i\neq j\leq n
     \]

Recall that the $k$-total cut complex of prism $G_n$ has been investigated in \cite{Bayer_2024_02} as follow:
\begin{theorem}\cite{Bayer_2024_02}\label{thm:PrismCliquek=2My2ndDMM2022June21} The $(2n-3)$-dimensional (total) cut complex $\Delta^t_2(G_n), n\ge 2,$ has the homotopy type of a wedge of $(n-1)$ spheres $\mathbb{S}^{2n-4}$ of dimension $2n-4$.
\end{theorem}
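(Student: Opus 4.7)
The plan is to use discrete Morse theory: I will construct two successive element matchings on $\Delta_2^t(G_n)$, the first with pivot $1^{+}$ and the second (on the remaining critical cells) with pivot $1^{-}$, and show that exactly $n-1$ critical cells of dimension $2n-4$ survive. In step one, I pair $F$ with $F\triangle\{1^{+}\}$ whenever both are faces; the critical cells, i.e.\ those $F$ with $1^{+}\notin F$ and $F\cup\{1^{+}\}$ not a face, split via a case analysis of the pair-witness condition into:
\begin{itemize}
\item[(A)] $F=(P\setminus\{1^{+}\})\cup G$ with $G\subseteq M$ and $G\not\supseteq M\setminus\{1^{-}\}$, of which there are $2^n-2$ cells of various dimensions; and
\item[(B)] $F=(P\setminus\{1^{+},i^{+}\})\cup(M\setminus\{i^{-}\})$ for $i\in\{2,\ldots,n\}$, exactly $n-1$ cells of dimension $2n-4$.
\end{itemize}

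In step two, I match the critical cells of step one using pivot $1^{-}$. The involution $G\mapsto G\triangle\{1^{-}\}$ exchanges the two excluded subsets $M$ and $M\setminus\{1^{-}\}$, so it preserves the defining condition of family (A) and pairs off all such cells. Family (B) cells all contain $1^{-}$, but $F\setminus\{1^{-}\}$ has $M$-part of size $n-2$, which matches neither family and is therefore not critical after step one, so the $n-1$ cells of family (B) survive as the only critical cells. Acyclicity of the combined matching can be argued by a two-region decomposition of the Hasse diagram: step-one matched-up arrows send cells without $1^{+}$ into cells containing $1^{+}$, whereupon only unmatched-down covers remain, strictly decreasing support size toward the sink $\{1^{+}\}$; within the ``$1^{+}$-absent'' region, the step-two matched arrows form an ordinary element matching (individually acyclic), and any unmatched-down cover that leaves the critical-after-step-one set lands on a step-one-matched cell, which in turn transitions into the ``$1^{+}$-present'' region.

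With acyclicity in hand, Forman's theorem yields a CW model of $\Delta_2^t(G_n)$ with exactly $n-1$ cells in dimension $2n-4$ and no critical cells in any lower dimension; consequently the reduced homology equals $\mathbb{Z}^{n-1}$ concentrated in degree $2n-4$, and $\Delta_2^t(G_n)$ is $(2n-5)$-connected for $n\geq 3$. The Hurewicz and Whitehead theorems then imply $\Delta_2^t(G_n)\simeq\bigvee_{i=1}^{n-1}\mathbb{S}^{2n-4}$; the case $n=2$ is verified directly, since $\Delta_2^t(G_2)$ is a disjoint union of two edges, homotopy equivalent to $\mathbb{S}^{0}$. The main obstacle I expect is the rigorous verification of acyclicity: the two-region sketch is intuitive, but ruling out all cycles that could alternate between step-one matches, step-two matches, and unmatched covers may require exhibiting a global, strictly decreasing weight function on directed paths in the modified Hasse diagram (for instance, a lexicographic weight tracking the pair $(|P\setminus F|,|M\setminus F|)$ together with the presence of $1^{+}$ and $1^{-}$).
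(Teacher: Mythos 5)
Your proposal is correct and follows essentially the same route as the cited source \cite{Bayer_2024_02} (and as this paper's own analogous argument for Theorem \ref{CL-total-cut}): sequential element matchings with pivots $1^{+}$ and then $1^{-}$, whose only surviving critical cells are the $n-1$ faces $(P\setminus\{1^{+},i^{+}\})\cup(M\setminus\{i^{-}\})$ of dimension $2n-4$, with $\varnothing$ matched to $\{1^{+}\}$ supplying the $0$-cell. The acyclicity you flag as the main obstacle is not a genuine gap: a sequence of element matchings is always a Morse matching (see \cite{Singh_2021} and the appendix of \cite{Bayer_2024_02}), and your two-region sketch is precisely the standard proof of that fact.
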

\begin{theorem}%syf
    For a prism $G_n$, the neighborhood complex of the induced $2$-independent graph $H_2$, namely $\mathcal{N}(H_2)$, has homotopy type a single sphere of dimension $n-2$.
\end{theorem}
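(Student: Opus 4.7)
My plan is to prove $\mathcal{N}(H_2) \simeq \mathbb{S}^{n-2}$ using discrete Morse theory, following the element matching framework from the Appendix of \cite{Bayer_2024_02}. The starting combinatorial observation is that, identifying vertices of $H_2$ with pairs $(i,j) \in [n] \times [n]$ with $i \neq j$, a subset $S \subseteq V(H_2)$ is a face of $\mathcal{N}(H_2)$ iff $R(S) := [n] \setminus \pi_1(S)$ and $C(S) := [n] \setminus \pi_2(S)$ are both non-empty and do not coincide as a common singleton --- equivalently, iff there exists a common neighbor $v_{k,l}$ with $k \in R(S)$, $l \in C(S)$, $k \neq l$.

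I first locate a natural $(n-2)$-sphere inside $\mathcal{N}(H_2)$. Set $p_i := \{i^+, (i+1)^-\}$ for $i \in [n]$ (cyclic indices). A direct check gives that $\{p_i : i \in J\}$ is a face of $\mathcal{N}(H_2)$ iff $J \subsetneq [n]$: for $j_0 \notin J$ the vertex $p_{j_0}$ serves as a common neighbor; for $J = [n]$, both $\pi_1$ and $\pi_2$ exhaust $[n]$ so no common neighbor survives. Hence the subcomplex $\Sigma := \langle p_1, \ldots, p_n \rangle \cong \partial \Delta^{n-1} \cong \mathbb{S}^{n-2}$ embeds in $\mathcal{N}(H_2)$.

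Next I construct a discrete Morse matching whose critical cells assemble into a CW complex of the right homotopy type. Enumerate the non-cyclic vertices $V(H_2) \setminus \{p_1, \ldots, p_n\}$ in a fixed lexicographic order $x_1 < x_2 < \cdots$, and apply element matchings $M(x_1), M(x_2), \ldots$ sequentially to the still-unmatched faces. At stage $t$, each still-critical $F$ is paired with $F \triangle \{x_t\}$ whenever the latter is a face and has not yet been matched. Acyclicity of the composition follows from the standard lex-order argument. By Forman's theorem, $\mathcal{N}(H_2)$ is then homotopy equivalent to the CW complex generated by the remaining critical cells.

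\textbf{Main obstacle.} The technical heart is showing that the sequence of element matchings leaves precisely the data needed for Forman's theorem to yield $\mathbb{S}^{n-2}$. I verified this explicitly in the base case $n=3$: with pivot order $v_{1,3}, v_{2,1}, v_{3,2}$, the matching pairs off every face except the single edge $\{v_{2,1}, v_{3,2}\}$, whose Morse CW realization is $\mathbb{S}^1$. For general $n$ the argument proceeds by case analysis on the projection profile $(R(F), C(F))$ of each face: one shows that every face outside the distinguished sphere admits its first admissible toggle, and verifies compatibility of the singleton-obstruction in the face condition with the pivot order, so that a single critical cell of dimension $n-2$ survives. This combinatorial bookkeeping --- in particular handling the boundary cases where a toggle would collapse $R(F)$ or $C(F)$ to a common singleton --- is the delicate step of the proof.
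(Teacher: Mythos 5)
Your overall strategy (sequential element matchings in the sense of the appendix of \cite{Bayer_2024_02}) is a legitimate alternative to the paper's argument, and your preliminary observations are correct: the face criterion via $R(S)$, $C(S)$ is right, and the subcomplex $\Sigma$ spanned by the cyclic vertices $p_i=\{i^+,(i+1)^-\}$ is indeed a copy of $\partial\Delta^{n-1}$. But the proof has a genuine gap exactly where the theorem lives: you never determine the critical cells of your matching for general $n$. You verify $n=3$ by hand and then state that ``a case analysis on $(R(F),C(F))$'' will show that a single critical cell of dimension $n-2$ survives, calling this the delicate step. That delicate step \emph{is} the theorem; without it you have a plan, not a proof. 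Note also that acyclicity of a sequence of element matchings is automatic, so the only real content is precisely the bookkeeping you defer. Worse, the sketch is internally inconsistent: you assert that ``every face outside the distinguished sphere admits its first admissible toggle'' (i.e.\ gets matched), yet in your own $n=3$ computation the unique surviving critical cell $\{v_{2,1},v_{3,2}\}$ consists of non-cyclic vertices and lies outside $\Sigma$, while faces of $\Sigma$ itself can only be matched by toggling a non-cyclic vertex; so the relation between $\Sigma$ and the critical cells is not the one your outline presupposes, and it is not clear what the critical cell even is for general $n$ (for the conclusion $\mathbb{S}^{n-2}$ you also need to account for the empty face being matched, so that exactly one critical cell of dimension $n-2$ remains, as in the treatment of $\Delta^t_{n-1}(CL_n)$ in the paper).

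For comparison, the paper avoids all of this bookkeeping: it covers $\mathcal{N}(H_2)$ by the subcomplexes $U_i$ consisting of the faces of those facets $F_{l^+k^-}$ that contain the cyclic vertex $i^+(i+1)^-$. Every nonempty intersection $U_{i_1}\cap\cdots\cap U_{i_k}$ is a cone with apex $i_1^+(i_1+1)^-$, hence contractible; any $n-1$ of the $U_i$ intersect (in the full simplex on $F_{i^+(i+1)^-}$) while all $n$ do not, so the nerve is $\partial\Delta^{n-1}\simeq\mathbb{S}^{n-2}$, and the Nerve Lemma finishes the proof. Your cyclic vertices $p_i$ are exactly the vertices indexing that cover, so your sphere-spotting observation is the right starting point; if you wish to keep the Morse-theoretic route you must exhibit the matching explicitly and prove, for all $n$, which faces remain critical, but the cover argument above reaches the same conclusion in a few lines.
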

\begin{proof}
The proof is given by the Nerve Lemma \ref{Nerve Lemma}. Let for all $1 \leq i\leq n$ the subcomplexes $U_i$ be defined by:
\[
U_i=\{\sigma \subseteq F_{l^+k^-}: i^+(i+1)^-\in F_{l^+k^-}\}
\]
Obviously, the union of $U_i$'s is $\mathcal{N}(H_2)$. Any $(n-1)$-intersection of $\mathcal{U}=\{U_i:1\leq i\leq n\}$ is nonempty, since \[
\bigcap_{j\in [n]\setminus i}U_j=\{\sigma\subseteq F_{i^+(i+1)^-}\}.
\] 
On the other hand, it is easy to show $\bigcap_{i=1}^{n}U_i=\varnothing$. This shows that the nerve complex generated by $\mathcal{U}$ has $n$ facets: \[
\sigma_i=[n]\setminus i,\quad 1\leq i\leq n
\]
Therefore, it is the boundary of an $(n-1)$-simplex, which is homotopy equivalent to $\mathbb{S}^{n-2}$.

Next, we verify that $\mathcal{U}$ is a good open cover of $\mathcal{N}(H_2)$. This is straightforward since any intersection $U_{i_{1}}\cap U_{i_{2}}\cap \cdots\cap U_{i_{k}}$ is a complex consisting of some facets containing $\{i_1^+(i_1+1)^-\}$. Thus, it is a cone, which is certainly contractible. We conclude by the nerve lemma that
\[
\mathcal{N}(H_2)\simeq \mathfrak{N}(U_i)\simeq\mathbb{S}^{n-2}.
\]
\end{proof}
Since the independent number of a prism $G_n$ is only two, it is sometimes more interesting to consider the subgraph $CL_n$, which has a larger independent number. 
\begin{df}
The \emph{Circular Ladder Graph} $CL_n$ is a subgraph of the prism $G_n$. 
It has $2n$ vertices which we denote by $\{i^{+},i^{-}:1\le i\le n\}$, and edges $\{i^{+}(i+1)^{+}\}_{1\le i\le n}$ (outer cycle), $\{i^{-}(i+1)^{-}\}_{1\le i\le n}$ (inner cycle), $\{i^{+}i^{-}\}_{1\le i\le n}$ (rungs), where indices are taken modulo $n$, see Figure \ref{fig:both_graphs} for examples.
\end{df}
\begin{figure}[htbp]
    \centering
    \begin{subfigure}{0.4\textwidth}
        \centering
        \begin{tikzpicture}[scale=0.8]
            \node[standard2] (node1-) at (0:1.4cm) {$1^-$};
            \node[standard2] (node2-) at (72:1.4cm) {$2^-$};
            \node[standard2] (node3-) at (144:1.4cm) {$3^-$};
            \node[standard2] (node4-) at (216:1.4cm) {$4^-$};
            \node[standard2] (node5-) at (288:1.4cm) {$5^-$};
            
            \node[standard3] (node1+) at (0:3cm) {$1^+$};
            \node[standard3] (node2+) at (72:3cm) {$2^+$};
            \node[standard3] (node3+) at (144:3cm) {$3^+$};
            \node[standard3] (node4+) at (216:3cm) {$4^+$};
            \node[standard3] (node5+) at (288:3cm) {$5^+$};

            \draw[thick, gray] (node1-) -- (node2-) -- (node3-) -- (node4-) -- (node5-) -- (node1-) -- (node3-) -- (node5-) -- (node2-) -- (node4-) -- (node1-);
            \draw[very thick, gray] (node1+) -- (node2+) -- (node3+) -- (node4+) -- (node5+) -- (node1+) -- (node3+) -- (node5+) -- (node2+) -- (node4+) -- (node1+);
            \draw[very thick] (node1+) -- (node1-);
            \draw[very thick] (node2+) -- (node2-);
            \draw[very thick] (node3+) -- (node3-);
            \draw[very thick] (node4+) -- (node4-);
            \draw[very thick] (node5+) -- (node5-);
        \end{tikzpicture}
        \caption{Prism Graph $G_5$}
    \end{subfigure}
    \hspace{1em}
    \begin{subfigure}{0.4\textwidth}
        \centering
        \begin{tikzpicture}[scale=0.8]
            \node[standard2] (node1-) at (0:1.4cm) {$1^-$};
            \node[standard2] (node2-) at (72:1.4cm) {$2^-$};
            \node[standard2] (node3-) at (144:1.4cm) {$3^-$};
            \node[standard2] (node4-) at (216:1.4cm) {$4^-$};
            \node[standard2] (node5-) at (288:1.4cm) {$5^-$};
            
            \node[standard3] (node1+) at (0:3cm) {$1^+$};
            \node[standard3] (node2+) at (72:3cm) {$2^+$};
            \node[standard3] (node3+) at (144:3cm) {$3^+$};
            \node[standard3] (node4+) at (216:3cm) {$4^+$};
            \node[standard3] (node5+) at (288:3cm) {$5^+$};

            \draw[thick, gray] (node1-) -- (node2-) -- (node3-) -- (node4-) -- (node5-) -- (node1-);
            \draw[very thick, gray] (node1+) -- (node2+) -- (node3+) -- (node4+) -- (node5+) -- (node1+);

            \draw[very thick] (node1+) -- (node1-);
            \draw[very thick] (node2+) -- (node2-);
            \draw[very thick] (node3+) -- (node3-);
            \draw[very thick] (node4+) -- (node4-);
            \draw[very thick] (node5+) -- (node5-);
        \end{tikzpicture}
        \caption{Circular Ladder Graph $CL_5$}
    \end{subfigure}
    \caption{Prism graph $G_5$ and Circular Ladder graph $CL_5$}
    \label{fig:both_graphs}
\end{figure}
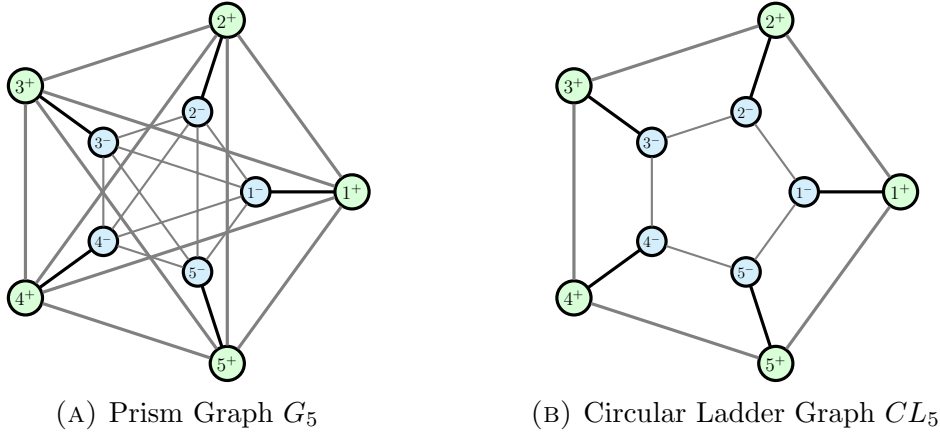
%\begin{figure}[htbp]
 %   \centering
  %  \begin{subfigure}[b]{0.45\textwidth}
   %     \centering
    %    \includegraphics[width=\textwidth]{circular ladder.png}
     %   \caption{$CL_5$}
      %  \label{fig:left}
    %\end{subfigure}
    %\hfill
    %\begin{subfigure}[b]{0.45\textwidth}
     %   \centering
      %  \includegraphics[width=\textwidth]{Hk for Circular ladder.png}
       % \caption{$H_4$}
        %\label{fig:right}
    %\end{subfigure}
    %\caption{for $n=5$, circular ladder %graph and induced $4$-independent %graph}
 %   \label{fig:both}
%\end{figure}
\begin{theorem}\label{CL-total-cut}
    The $n$-dimensional total cut complex $\Delta_{n-1}^{t}(CL_n)$, has the homotopy type of a wedge of $(n-1)$ spheres $\mathbb{S}^{2}$ if $n$ is odd, while it has the homotopy type of a wedge of $(n-1)^2$ spheres $\mathbb{S}^2$ if $n$ is even.
\end{theorem}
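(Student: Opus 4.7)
The plan is to construct an explicit discrete Morse matching on $\Delta_{n-1}^t(CL_n)$ whose critical cells lie only in dimensions $0$ and $2$, following the element-matching framework recalled at the end of Section~\ref{sec2:preli}. The first step is to classify the facets. Any independent set $S$ of size $n-1$ in $CL_n$ decomposes uniquely as $\{i^+:i\in S^+\}\sqcup\{j^-:j\in S^-\}$ with $S^{\pm}\subseteq[n]$ independent in $C_n$ and $S^+\cap S^-=\varnothing$ (the rung condition). A short gap-counting argument then produces two regimes: for odd $n$, both summands have size $(n-1)/2$ and their union omits a single index $i\in[n]$, so the two possible alternating colorings of the path $[n]\setminus\{i\}$ give $2n$ facets; for even $n$, one of $S^{\pm}$ must be a maximum independent set of $C_n$ (the all-odd or all-even indices) while the other, of size $n/2-1$, omits one index from the opposite parity class, producing $2n$ facets but with a different combinatorial arrangement.

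With the facet structure in hand, I would build an iterated element matching on $\Delta_{n-1}^t(CL_n)$ using a deliberate linear order on the $2n$ vertices of $CL_n$, for instance sweeping rung by rung $1^+,1^-,2^+,2^-,\ldots$. At each stage one pairs a surviving face $F$ with $F\triangle\{v\}$ whenever both remain in the complex. Acyclicity is automatic for such iterated element matchings (cf.\ the appendix of \cite{Bayer_2024_02}), and the remaining task is to identify the critical cells---those faces obstructed at every step, which one expects to correspond to complements that are tight independent $(n-1)$-sets admitting no local perturbation through the current pivot.

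The final step is to enumerate the critical cells and show that they consist of a single $0$-cell together with the claimed number of $2$-cells---$n-1$ for odd $n$ and $(n-1)^2$ for even $n$---with no critical cells in dimension $1$ or $\geq 3$. In the odd regime the critical $2$-cells should be indexed by the choice of missing index $i\in[n]$ modulo the basepoint; in the even regime, each of the two parity classes independently chooses its omitted index, yielding the quadratic count. The discrete Morse theorem then identifies the homotopy type with a wedge of $\mathbb{S}^2$'s in the stated numbers. The main obstacle is engineering the matching so that the critical-cell enumeration comes out exactly right, particularly the passage from the linear count for odd $n$ to the quadratic count for even $n$, which reflects a genuine product structure coming from two independent extension choices on the two cycles rather than the diagonal structure seen in the odd case.
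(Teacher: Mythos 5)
Your facet classification is correct and matches the structure the paper works with: for odd $n$ the independent $(n-1)$-sets split into two alternating classes on the path $[n]\setminus\{i\}$, giving $2n$ facets, and for even $n$ one of $S^{\pm}$ is forced to be a full parity class while the other omits one vertex of the opposite class, again giving $2n$ facets. But from that point on your proposal is a plan rather than a proof, and the missing piece is exactly the mathematical content of the theorem. You never exhibit the matching's critical cells: you say the remaining task is to ``identify'' and ``enumerate'' them and that the main obstacle is ``engineering the matching so that the critical-cell enumeration comes out exactly right.'' That enumeration is the whole proof. In the paper's odd case, the matching is just two element matchings $\mathcal{M}_{1^+}$ followed by $\mathcal{M}_{1^-}$, and the work is a detailed case analysis of which faces survive both steps, ultimately showing the unmatched cells are precisely $\{1^-,j^+,j^-\}$ for $2\le j\le n$; nothing in your sketch reproduces or replaces that analysis, and your longer rung-by-rung sweep only adds further stages whose effect on the critical cells you would also have to control (in particular you must rule out critical cells in dimensions $1$ and $\ge 3$, since without that the wedge-of-spheres conclusion does not follow from a cell count). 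For even $n$ the gap is worse: no matching is proposed at all, only the heuristic that the two parity classes ``independently choose their omitted index,'' which does not by itself produce an acyclic matching with one critical $0$-cell and exactly $(n-1)^2$ critical $2$-cells.

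It is also worth knowing that the paper does not use Morse theory in the even case, and the alternative is much cheaper than what you are attempting. Writing $A=\{1^+,2^-,\dots,n^-\}$ and $B=\{1^-,2^+,\dots,n^+\}$, every facet is either $A\cup\{b\}$ with $b\in B$ or $B\cup\{a\}$ with $a\in A$, so
\[
\Delta_{n-1}^t(CL_n)=\bigl(\Delta_A * B_{\mathrm{discrete}}\bigr)\cup\bigl(\Delta_B * A_{\mathrm{discrete}}\bigr),
\]
a union of two contractible subcomplexes whose intersection is the $1$-skeleton of $K_{n,n}\simeq\bigvee_{(n-1)^2}\mathbb{S}^1$; the complex is therefore the suspension of that intersection, i.e.\ $\bigvee_{(n-1)^2}\mathbb{S}^2$. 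If you want a uniform Morse-theoretic proof you would have to carry out the even-case critical-cell construction explicitly, which your proposal defers; as written, the argument establishes neither the odd nor the even count.
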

\begin{proof}
 For even $n$, define:
    \[ A = \{1^+, 2^-, \dots, n^-\}, \quad B = \{1^-, 2^+, \dots, n^+\} \]
    
    Let $\Delta_A$ be a simplicial complex containing one facet $\{1^+,2^-,\dots,n^-\}$, and $A_{\text{discrete}}$ be a simplicial complex containing $n$ facets: $\{1^+\},\{2^-\},\dots,\{n^-\}$.
    
    The complex decomposes as $\Delta_{n-1}^t(CL_n) = X \cup Y$ where:
    \begin{align*}
        X &= \Delta_A * B_{\text{discrete}} \\
        Y &= \Delta_B * A_{\text{discrete}}
    \end{align*}
    Here $X$, $Y$ are both contractible with $X \cap Y$ being the 1-skeleton of $K_{n,n}$, which is homotopy equivalent to $\bigvee_{(n-1)^2} \mathbb{S}^1$.

    Using the suspension isomorphism \ref{suspension-iso}, we conclude that:
    \begin{equation}
        \Delta_{n-1}^t(CL_n) \simeq \text{susp}(\bigvee_{(n-1)^2} \mathbb{S}^1) \simeq \bigvee_{(n-1)^2} \mathbb{S}^2
    \end{equation}

Now assume $n$ is odd. We construct a Morse matching on the face poset of $\Delta_{n-1}^t(CL_n)$ and show there are exactly $(n-1)$ critical cells of dimension $2$. The critical cells are:
    \[ \sigma_j = \{1^-, j^{+}, j^{-}\} , \quad 2 \leq j \leq n \]
    
    Partition the vertices of $CL_n$ into:
    \[ V^+ = \{1^+, \dots, n^+\}, \quad V^- = \{1^-, \dots, n^-\} \]
    
    The facets of $\Delta_{n-1}^t(CL_n)$ are:
    \[ A^+(i) = \{i^+, i^-, (i+1)^+, (i+2)^-, \dots, (i+n-1)^-\} \]
    \[ A^-(i) = \{i^+, i^-, (i+1)^-, (i+2)^+, \dots, (i+n-1)^+\} \]
    for $1 \leq i \leq n$, where indices are taken modulo $n$.
    
    A subset $\sigma$ is a face iff $\sigma \subseteq A^{\pm}(i)$ for some $1\leq i\leq n$. 
    
    We use an element matching $\mathcal{M}_{1^+}$ first with $1^+$, followed by the element matching $\mathcal{M}_{1^-}$ with $1^-$. After the matching $\mathcal{M}_{1^+}$, the unmatched faces are $\sigma$ such that $1^+\notin \sigma$ and $1^+\cup \sigma$ is NOT a face. Now match with $1^-$. Given a face $\sigma$ unmatched by $\mathcal{M}_{1^+}$, we examine two possible cases that would result in $\sigma$ NOT being matched by $1^-$.
    \begin{enumerate}
        \item $1^-\notin \sigma, 1^+\notin \sigma$ and both $1^+\cup\sigma$ and $1^-\cup \sigma$ are NOT faces. Therefore, we may suppose $\sigma \subset A^{\pm}(i)$ for some $i\neq 1$, otherwise $1^+\cup\sigma$ will be a face. Then other $1^+\in A^{\pm}(i)$ or $1^-\in A^{\pm}(i)$, which leads that one of $1^+\cup\sigma$ and $1^-\cup \sigma$ is a face. This case is thus eliminated.
        \item $1^-\in\sigma$ but $\sigma$ cannot be matched with $\tau=\sigma\setminus 1^-$: This happens only if $\tau$ was already matched by $\mathcal{M}_{1^+}$, that is , $1^+\cup(\sigma\setminus 1^-)$ is a face.
    \end{enumerate}
    
    Hence, the unmatched faces $\sigma$ after $\mathcal{M}_{1^+}$, $\mathcal{M}_{1^-}$ are precisely those in Part (2) above, which we will describe in more detail now.

    %\textbf{Claim}: unmatched faces $\sigma$ in Part (2) are exactly $\sigma=\{1^-,j^+,j^-\}_{2\le j\le n}$.

    Let $\gamma=\sigma\setminus1^-$, the conditions are equivalent to 
    $1^+, 1^-\notin\gamma$, $\gamma\cup1^+ , \gamma\cup 1^-$ are faces and $\gamma\cup 1^+\cup 1^-$ is NOT a face. Obviously $\gamma\cup 1^{\pm}$ is not in $A^{\pm}(1)$. We then see that $\gamma\cup 1^+\subseteq A^-(2k+1)$ or $A^+(2k)$ for some $k\geq 1$ since $n$ is odd.

Therefore, $\gamma^c\setminus1^+$ contains an independent set of size $n-1$. In fact, there are only two possible cases.
\begin{enumerate}
    \item $\{1^-,2^+,\dots,(2k)^+,(2k+2)^-,\dots,n^+\}$;
    \item $\{1^-,2^+,\dots,(2k-1)^-,(2k+1)^+,\dots,n^+\}$. 
\end{enumerate}
Similarly, $\gamma^c\setminus 1^-$ contains either
$\{1^+,2^-,\dots,(2m)^-,(2m+2)^+,\dots,n^-\}$ or
$\{1^+,2^-,\dots,(2m-1)^+,(2m+1)^-,\dots,n^-\}$ for some $m\geq 1$. Without loss of generality, we may assume $k<m$. There are four possible relations between $\gamma\setminus 1^+$ and $\gamma\setminus1^-$. First, we check that:
\begin{itemize}
\item $\gamma^c\setminus 1^+$ contains $\{1^-,2^+,\dots,(2k)^+,(2k+2)^-,\dots,n^+\}$,
\item $\gamma^c\setminus 1^-$ contains $\{1^+,2^-,\dots,(2m)^-,(2m+2)^+,\dots,n^-\}$.
\end{itemize}

   In this case, it turns out that $\gamma^c\setminus(1^+\cup1^-)$ contains $\{2^{\pm},3^{\pm},\cdots,(2k)^{\pm},(2k+1)^+,(2k+2)^-,\cdots,(2m+1)^+,(2m+2)^{\pm},\cdots,n^{\pm}\}$, which deduces that $\gamma^c\setminus(1^+\cup 1^-)$ contains an independent set of size $n-1$. This is a contradiction!

    Hence $k=m$ and since $\gamma\cup 1^+\cup 1^-$ is not a face, we conclude that $\gamma=\{(2k+1)^+,(2k+1)^-\}$ is the only case meeting the requirements.

The same argument shows if 
\begin{itemize}
\item $\gamma^c\setminus 1^+$ contains $\{1^-,2^+,\dots,(2k-1)^-,(2k+1)^+,\dots,n^+\}$,
\item $\gamma^c\setminus 1^-$ contains $\{1^+,2^-,\dots,(2m-1)^+,(2m+1)^-,\dots,n^-\}$.
\end{itemize}
then $\gamma=\{(2k)^+,(2k)^-\}$.

As for the latter two cases, we claim that there is a contradiction. Assume that 
\begin{itemize}
\item $\gamma^c\setminus 1^+$ contains $\{1^-,2^+,\dots,(2k-1)^-,(2k+1)^+,\dots,n^+\}$,
\item $\gamma^c\setminus 1^-$ contains $\{1^+,2^-,\dots,(2m)^-,(2m+2)^+,\dots,n^-\}$.
\end{itemize}
In this case, it turns out that $\gamma^c\setminus(1^+\cup1^-)$ contains $\{2^{\pm},3^{\pm},\cdots,(2k-1)^{\pm},(2k)^-,(2k+1)^+,\cdots,(2m)^-,(2m+1)^+,(2m+2)^{\pm},\cdots,n^{\pm}\}$, which deduces that $\gamma^c\setminus(1^+\cup1^-)$ contains an independent set of size $n-1$, contrary to the fact $\gamma\cup 1^+\cup 1^-$ is not a face. Then $k=m$, but in this condition, $\gamma^c\setminus(1^+\cup1^-)$ contains $\{2^{\pm},3^{\pm},\cdots,(2k-1)^{\pm},(2k)^-,(2k+1)^+,(2k+2)^{\pm},\cdots,n^{\pm}\}$, which is also a contradiction! 
    
    Therefore, we conclude that there are $n-1$ critical cells of our Morse matching, $\sigma=\{1^-,j^+,j^-\}_{2\le j\le n}$, and so combining with a $0$-cell because $\varnothing$ is matched with $\{1^+\}$, the statement about the homotopy type follows.
    \begin{equation}
        \Delta_{n-1}^t(CL_n)\simeq \bigvee_{n-1}\mathbb{S}^2
    \end{equation}
\end{proof}
Now we focus on the homotopy type of the neighborhood complexes of $\mathcal{N}(H_{n-1})$. This problem also needs to be considered in terms of odd and even.
\begin{theorem}\label{CL-neighborhood}
When $n$ is odd, the induced $(n-1)$-independent graph $H_{n-1}$ is isomorphic to $CL_n$ as graphs: $H_{n-1}\cong CL_n$. Then the neighborhood complex $\mathcal{N}(H_{n-1})$ is homeomorphic to the neighborhood complex $\mathcal{N}(CL_n)$ and is homotopy equivalent to $\mathbb{S}^1$.

When $n$ is even ,the neighborhood complex $\mathcal{N}(H_{n-1})$ is homotopy equivalent to $\mathbb{S}^0$.

\end{theorem}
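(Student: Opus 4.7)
The plan is to first identify the graph $H_{n-1}$ explicitly in each parity case, and then compute its neighborhood complex by elementary means. As a common starting point, I would observe that since $i^+$ and $i^-$ are joined by a rung, any IS of size $n-1$ in $CL_n$ covers $n-1$ distinct indices, say $[n]\setminus\{j\}$; the sign assignment on the induced path $P_{n-1}$ must then be one of the two proper $2$-colorings. This yields exactly $2n$ vertices of $H_{n-1}$, which I denote $S^{(j)}_A$ and $S^{(j)}_B$, with $S^{(j)}_A$ chosen to place $+$ at $j+1$.

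For odd $n$, I would run a modular-parity computation on the overlap $[n]\setminus\{j,j'\}$---using that $k$ and $n-k$ have opposite parities when $n$ is odd---to show that $S^{(j)}_A$ and $S^{(j')}_\epsilon$ are disjoint precisely when $(j',\epsilon)\in\{(j,B),(j\pm 1,A)\}$, and symmetrically from $S^{(j)}_B$. Thus $\phi\colon S^{(j)}_A\mapsto j^+$, $S^{(j)}_B\mapsto j^-$ is a graph isomorphism $H_{n-1}\cong CL_n$, and it suffices to compute $\mathcal{N}(CL_n)$. Its facets are the triangles $N(i^{\pm})$, and the edge $\{(i-1)^+,(i+1)^+\}$ has $i^+$ as its unique common neighbor in $CL_n$, so it is a free face of $N(i^+)$. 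I would first collapse every $N(i^+)$ via this free edge; after this round, each of the three edges of $N(i^-)$ lies in no remaining facet besides $N(i^-)$ itself, so a second round of collapses removes every $N(i^-)$ via $\{(i-1)^-,(i+1)^-\}$. What survives is a $1$-dimensional subcomplex on $2n$ vertices with exactly the $2n$ ``mixed'' edges $\{i^+,(i\pm 1)^-\}$; tracing the walk $1^+\to 2^-\to 3^+\to\cdots$ and using the odd parity of $n$, this subcomplex is a single cycle of length $2n$, so $\mathcal{N}(CL_n)\simeq\mathbb{S}^1$.

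For even $n$, $CL_n$ is bipartite with $\alpha(CL_n)=n$; the only two maximum independent sets $M_+=\{1^+,2^-,\ldots,n^-\}$ and $M_-=\{1^-,2^+,\ldots,n^+\}$ partition $V(CL_n)$, and every IS of size $n-1$ is of the form $M_\pm\setminus\{v\}$. This splits the vertices of $H_{n-1}$ into two ``types'' of size $n$. By inclusion--exclusion two same-type sets meet in at least $n-2\ge 2$ vertices and so are never disjoint, while any two opposite-type sets are automatically disjoint because $M_+\cap M_-=\varnothing$. Hence $H_{n-1}\cong K_{n,n}$. In $\mathcal{N}(K_{n,n})$ the neighborhood of any vertex is the entire opposite part, so each part is itself a facet, and the complex is the disjoint union of two $(n-1)$-simplices, homotopy equivalent to $\mathbb{S}^0$.

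The main technical obstacle lies in the odd case: I will need to check carefully that the free-face condition genuinely propagates through both collapse rounds, and that the surviving $1$-complex is a single $2n$-cycle rather than several smaller ones---it is precisely here that the parity of $n$ is essential. Under even $n$ the same trace would instead produce two disjoint $n$-cycles, which is consistent with $H_{n-1}$ no longer being isomorphic to $CL_n$ and requiring the bipartite description $K_{n,n}$ given above.
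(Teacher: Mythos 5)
Your proposal is correct and follows essentially the same route as the paper: for odd $n$ you identify the $2n$ maximum-minus-one independent sets and their disjointness pattern to get $H_{n-1}\cong CL_n$, then collapse the $2$-dimensional complex $\mathcal{N}(CL_n)$ along the free same-sign ``skip'' edges $\{(i-1)^{\pm},(i+1)^{\pm}\}$ of the facets $N(i^{\pm})$, leaving the $2n$-cycle of mixed edges, hence $\mathbb{S}^1$; for even $n$ you identify $H_{n-1}\cong K_{n,n}$ via the two alternating maximum independent sets, whose neighborhood complex is two disjoint simplices, hence $\mathbb{S}^0$. This matches the paper's argument (which obtains the isomorphism by describing the neighbors of each vertex as the three $(n-1)$-independent subsets of the complementary facet, and then performs the same elementary collapses), with your version merely spelling out the parity and free-face verifications in more detail.
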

\begin{proof}
    First, assume $n$ is odd. By definition, $H_{n-1}$ has $2n$ vertices:
    \[ V\setminus A^+(i) = \{(i+1)^-, (i+2)^+, \dots, (i+n-1)^+\} \]
    \[ V\setminus A^-(i) = \{(i+1)^+, (i+2)^-, \dots, (i+n-1)^-\} \]
    for $1 \leq i \leq n$.

    We relabel them as $a_i=V\setminus A^+(i)$, $b_i=V\setminus A^-(i)$ for $1\leq i\leq n$. Since $A^+(i)$ has three independent subsets of cardinality $n-1$, which are exactly $a_{i-1},a_{i+1},b_i$. If we equate $a_i$ and $i^+$ in the circular ladder graph $CL_n$, and similarly equate $b_i$ and $i^-$, we see that $H_{n-1}\cong CL_n$ as desired.

    Therefore, it is sufficient to investigate the neighborhood complex of $CL_n$. Since the graph is $3$-regular, $\mathcal{N}(CL_n)$ is $2$-dimensional and has $2n$ facets: $F_i=\set{i^+,(i+1)^-,(i+2)^+}$ and $F_{n+i}=\set{i^-,(i+1)^+,(i+2)^-}$ for $1\le i\le n$. Notice that for all $1\le j\le 2n$, $|F_j\cap F_{j+1}|=2$, here we identify $F_{2n+1}=F_1$, see Figure \ref{fig:skeleton}. Moreover,$\set{i^+,(i+2)^+}$ and $\set{i^-,(i+2)^-}$ are free faces of certain facets. Using an element collapse argument, we get 
    \[
    \mathcal{N}(CL_n)\simeq \mathbb{S}^1.
    \]
\begin{figure}[ht]
\centering
\begin{tikzpicture}[
    vertex/.style={draw, circle, minimum size=10mm, fill=white, 
                   text height=1.5ex, text depth=.25ex},
    every edge/.style={draw, thick}
]

% 精确创建顶点标签
\foreach \i in {1,...,10} {
    % 定义标签规则
    \ifnum\i=1 \def\mylabel{$1^+$}\fi
    \ifnum\i=2 \def\mylabel{$2^-$}\fi
    \ifnum\i=3 \def\mylabel{$3^+$}\fi
    \ifnum\i=4 \def\mylabel{$4^-$}\fi
    \ifnum\i=5 \def\mylabel{$5^+$}\fi
    \ifnum\i=6 \def\mylabel{$1^-$}\fi
    \ifnum\i=7 \def\mylabel{$2^+$}\fi
    \ifnum\i=8 \def\mylabel{$3^-$}\fi
    \ifnum\i=9 \def\mylabel{$4^+$}\fi
    \ifnum\i=10 \def\mylabel{$5^-$}\fi
    
    \node[vertex] (v\i) at ({\i*36}:3cm) {\mylabel};
}

% 连接所有边
\foreach \i [evaluate={
    \next = int(mod(\i,10)+1);
    \nextnext = int(mod(\i+1,10)+1);
    \prevprev = int(mod(\i-2+10,10)+1);
}] in {1,...,10} {
    \draw (v\i) -- (v\next);
    \draw (v\i) -- (v\nextnext);
    \draw (v\i) -- (v\prevprev);
}

\end{tikzpicture}
\caption{$1$-skeleton of $\mathcal{N}(CL_5)$}\label{fig:skeleton}
\end{figure}
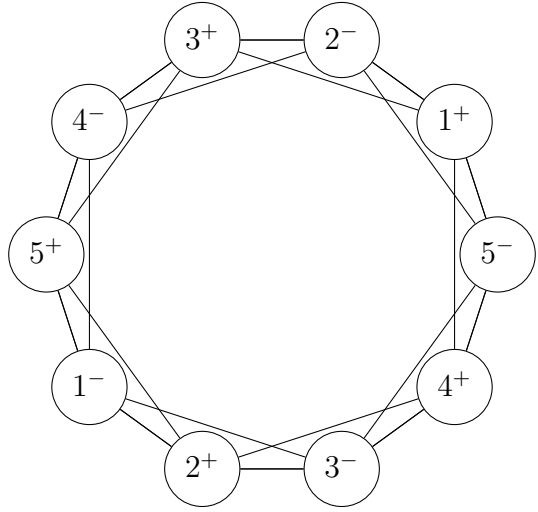
For $n$ to be even, $H_{n-1}$ has vertices $\binom{A}{n-1}$ and $\binom{B}{n-1}$, where $A$, $B$ has been defined above. Hence $\mathcal{N}(H_{n-1})$ has two disjoint simplices as facets:
    \begin{enumerate}
        \item $\{1^+, 2^-, \dots, n^-\}$,
        \item $\{1^-, 2^+, \dots, n^+\}$.
    \end{enumerate}
    which deduces that $\mathcal{N}(H_{n-1})\simeq \mathbb{S}^0$.
\end{proof}
The situation for squared cycle graphs is quite a bit more complicated. The \textit{squared cycle graph} $W_n$ is the graph with vertex set $[n]$, and edge set $\{\{i, i+1 \pmod n\}, \{i, i+2 \pmod n \}\}, i = 1, 2, \cdots, n$. We only check the case when $n=3k+1$ as below.
\begin{theorem}\cite{SSYZZ1}
    For $k\geq 3$, we have:
    \begin{equation}
        \Delta_k^t(W_{3k+1}) \simeq \mathbb{S}^3
    \end{equation}
\end{theorem}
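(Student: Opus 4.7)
The plan is to adapt the discrete Morse matching technique used in Theorem~\ref{CL-total-cut} for $\Delta_{n-1}^t(CL_n)$ with $n$ odd. Since $\mathbb{S}^3$ has the homotopy type of a single $3$-cell attached to a basepoint, the target is an element-matching sequence on the face poset of $\Delta_k^t(W_{3k+1})$ that leaves exactly one critical cell of dimension $3$, together with the implicit $0$-cell obtained when $\varnothing$ is matched by the first element matching.

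First I would enumerate the facets. The independent $k$-sets in $W_{3k+1}$ are exactly the $3$-stable $k$-subsets of $[3k+1]$. Writing such a set cyclically as gaps $g_1,\ldots,g_k$ with each $g_i \ge 3$ and $\sum g_i = 3k+1$, the total excess over the minimum is exactly $1$, so precisely one $g_i$ equals $4$ and the remaining $k-1$ gaps equal $3$. By the standard cyclic count $\frac{n}{n-2k}\binom{n-2k}{k}$ with $n = 3k+1$, this yields exactly $3k+1$ independent $k$-sets, and hence $3k+1$ facets of dimension $2k$ in $\Delta_k^t(W_{3k+1})$. The rigidity of the shape (a single $4$-gap among $k-1$ $3$-gaps) means the set of facets is a single cyclic orbit under rotation by $\mathbb{Z}/(3k+1)\mathbb{Z}$.

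Next, I would apply element matchings $\mathcal{M}_1, \mathcal{M}_2, \ldots$ in order for a short prefix of vertices of $[3k+1]$. Exactly as in the proof of Theorem~\ref{CL-total-cut}, a face $\sigma$ survives all matchings up to step $i$ only when, for each $j \le i$, either $j \notin \sigma$ and $\sigma \cup \{j\}$ is not a face, or $j \in \sigma$ and $\sigma \setminus \{j\}$ was already matched at an earlier step. The crucial translation is combinatorial: ``$\sigma \cup \{j\}$ is not a face'' says that every $3$-stable $k$-subset of $[3k+1]$ disjoint from $\sigma$ must contain $j$, so $j$ is forced to lie in every independent $k$-set inside $[3k+1] \setminus \sigma$. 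Given the rigid $(4, 3, 3, \ldots, 3)$ gap pattern, such forcing conditions impose a very restrictive shape on $\sigma$, determined (up to the location of the $4$-gap) by which vertices $j$ have already been matched.

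The main obstacle is the case analysis that identifies the surviving critical cells. For cycles in Theorem~\ref{thm:total-cut-cycle} the analogous count yields an $(n-2k)$-sphere because all gaps have size exactly $2$; here the presence of a single distinguished $4$-gap among otherwise uniform $3$-gaps creates asymmetry and forces the relevant combinatorics to localize near that $4$-gap. I expect the surviving critical $3$-cell to be a small configuration of four vertices sitting in a particular position relative to the $4$-gap, with the rest of the matching propagating uniformly around the cycle. The hypothesis $k \ge 3$ is needed to ensure that the small-$k$ configurations, in which the $4$-gap and the forced vertices overlap degenerately, do not appear; once that combinatorial bookkeeping is carried out, the conclusion $\Delta_k^t(W_{3k+1}) \simeq \mathbb{S}^3$ follows from the standard discrete Morse-theoretic correspondence between critical cells and CW-cells of a homotopy equivalent complex.
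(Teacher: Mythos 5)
Your setup is sound: independent sets of $W_{3k+1}$ are exactly the $3$-stable subsets, the cyclic gap argument (one gap of $4$, the remaining $k-1$ gaps of $3$) is correct, the count $\tfrac{n}{n-2k}\binom{n-2k}{k}=3k+1$ is right, and so $\Delta_k^t(W_{3k+1})$ has $3k+1$ facets of dimension $2k$ forming one orbit under the rotation action. But from that point on the proposal is a plan rather than a proof, and the missing part is precisely the content of the theorem (which this paper only cites from \cite{SSYZZ1} rather than proves). You never specify which vertices are matched, in which order, nor do you carry out the survivor analysis; instead you write that you ``expect'' a single critical $3$-cell near the $4$-gap and that the ``combinatorial bookkeeping'' will confirm it. In discrete Morse arguments of this type (compare the odd case of Theorem~\ref{CL-total-cut}, where the matchings $\mathcal{M}_{1^+},\mathcal{M}_{1^-}$ are followed by an explicit, exhaustive classification of unmatched faces), the entire difficulty is exactly that classification: one must show that after the chosen element matchings the unmatched faces consist of one cell of dimension $3$ and nothing else in any dimension between $1$ and $2k$, and that the matching is acyclic. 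Since the facet dimension $2k$ grows with $k$ while the answer is always $\mathbb{S}^3$, an enormous amount of cancellation must be verified, and nothing in the proposal indicates why a short prefix of element matchings would achieve it or why no critical cells survive in higher dimensions.

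There is also a conceptual slip in the heuristic you lean on: the $4$-gap is not a global feature of the complex but a feature of each individual independent set (each facet's complement has its own $4$-gap, and these rotate through all $3k+1$ positions). The complex itself is invariant under the $\mathbb{Z}/(3k+1)$ rotation, so any asymmetry that singles out a critical cell can only come from the order in which you perform the element matchings, not from ``the'' $4$-gap; the expectation that the combinatorics ``localizes near that $4$-gap'' therefore does not make sense as stated and cannot substitute for the case analysis. Until the matching is written down explicitly and the unmatched faces are determined (as is done, face by face, in the proof of Theorem~\ref{CL-total-cut}), the claim $\Delta_k^t(W_{3k+1})\simeq\mathbb{S}^3$ remains unproved by this route.
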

\begin{theorem}\label{neighbor-Wn}
    Let $W_{3k+1}$ be the squared cycle graph of $3k+1$ vertices for $k\ge 3$. Let $H_k$ be an induced $k$-independent graph; then it is $k+2$-regular. Moreover, $\mathcal{N}(H_k)$ is $k+1$ dimensional and  homotopy equivalent to $\mathbb{S}^1$.
\end{theorem}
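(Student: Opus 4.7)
My plan is to identify $V(H_k)$ combinatorially, derive the connection set $D$ of $H_k$ as a circulant graph, and then attack the homotopy statement with a discrete Morse matching, keeping a Nerve-Lemma route in reserve.

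Every independent $k$-subset of $W_{3k+1}$ has $k$ cyclic gaps each of size at least $3$ summing to $3k+1$, so exactly one gap has size $4$ and the remaining $k-1$ gaps have size $3$. Indexing by the vertex $i$ immediately preceding the unique $4$-gap gives
\[
S_i=\{i,\,i+4,\,i+7,\,\ldots,\,i+3k-2\}\pmod{3k+1},
\]
so $|V(H_k)|=3k+1$ and $H_k$ is the circulant graph on $\mathbb{Z}_{3k+1}$ with connection set $D=\{d\neq 0:S_0\cap S_d=\varnothing\}$. A direct enumeration of pairwise differences inside $S_0$ gives
\[
(S_0-S_0)\setminus\{0\}\equiv\{3m,\,3m+1:1\le m\le k-1\}\pmod{3k+1},
\]
a set of size $2(k-1)$, hence $|D|=(3k+1)-1-2(k-1)=k+2$. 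This establishes $(k+2)$-regularity; the facets $N(S_i)=i+D$ of $\mathcal{N}(H_k)$ each contain $k+2$ vertices, so $\dim\mathcal{N}(H_k)=k+1$.

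For the homotopy statement, I would run a discrete Morse matching on the face poset of $\mathcal{N}(H_k)$ via iterated element matchings $\mathcal{M}_{S_0},\mathcal{M}_{S_1},\ldots$, in the spirit of the proof of \cref{CL-total-cut}. After $\mathcal{M}_{S_0}$, a face $\sigma$ is unmatched precisely when $S_0\notin\sigma$ and $\sigma\cup\{S_0\}\notin\mathcal{N}(H_k)$; the second condition forces $\sigma$ to lie only in facets $F_i=i+D$ with $i\notin D$, equivalently, $\sigma$ avoids the entire closed neighborhood of $S_0$ in $H_k$. Successive matchings with $S_1,S_2,\ldots$ further pare down the unmatched locus; the target claim is that exactly one critical cell remains in each of dimensions $0$ and $1$ and none in higher dimensions, whereupon the Morse inequalities yield $\mathcal{N}(H_k)\simeq\mathbb{S}^1$. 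As a consistency check, every $d\in D$ satisfies $d\not\equiv 0\pmod 3$, so $S_i\mapsto i\bmod 3$ is a proper $3$-coloring, forcing $\chi(H_k)=3$, and Lov\'asz's bound then requires $\mathrm{conn}(\mathcal{N}(H_k))\le 0$, consistent with $\mathbb{S}^1$.

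The main obstacle is the bookkeeping required to enumerate the critical cells across these successive matchings: a single face can lie in many facets $F_i=i+D$ because $D$ is a symmetric circulant set with rich overlap structure under translation, and replicating the case analysis of the $CL_n$ proof demands a judicious matching order together with a full characterization of which overlap configurations leave a face unmatched at each stage. As a back-up, one may cover $\mathcal{N}(H_k)$ by the three subcomplexes obtained as preimages, under the simplicial map $\phi_*\colon\mathcal{N}(H_k)\to\mathcal{N}(K_3)\simeq\mathbb{S}^1$ induced by the above $3$-coloring, of the three antistars in $\mathcal{N}(K_3)$; since this antistar cover of $\mathcal{N}(K_3)$ has triangle nerve, by the Nerve Lemma~\ref{Nerve Lemma} it would suffice to prove that each such subcomplex (consisting of the faces of $\mathcal{N}(H_k)$ avoiding one color class) and each pairwise nonempty intersection is contractible.
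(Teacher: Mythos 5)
Your combinatorial preamble is correct and clean: the gap argument identifying the $3k+1$ independent $k$-sets $S_i$, the circulant description of $H_k$ with $(S_0-S_0)\setminus\{0\}=\{3m,3m+1:1\le m\le k-1\}$, hence $|D|=k+2$, and the resulting $(k+2)$-regularity and $\dim\mathcal{N}(H_k)=k+1$ all check out; this is essentially the paper's lexicographic enumeration in different coordinates. But the heart of the theorem, $\mathcal{N}(H_k)\simeq\mathbb{S}^1$, is not proved. Your primary route stops at a ``target claim'' about critical cells whose verification you explicitly defer, and even its first step is off: after $\mathcal{M}_{S_0}$ an unmatched face need \emph{not} avoid the closed neighborhood of $S_0$. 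For $k=3$ one has $D=\{1,2,5,8,9\}$ and the facet $3+D=\{1,2,4,5,8\}$ is unmatched (it is not contained in any $j+D$ with $j\in D$, and no translate of $D$ contains it together with $0$), yet it contains four neighbors of $S_0$; so the pruning structure you hope to induct on is not there as stated.

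The back-up route fails outright, because the ``consistency check'' it rests on is false: $3k\in D$ (indeed $S_{3k}=\{3j:1\le j\le k\}$ is disjoint from $S_0=\{0\}\cup\{3j+1:1\le j\le k-1\}$), so $S_i\mapsto i\bmod 3$ is not a proper coloring. Worse, for $k=3$ the graph $H_3=C_{10}(1,2,5)$ has $\alpha(H_3)=3$ on $10$ vertices, hence $\chi(H_3)\ge 4$, so no proper $3$-coloring exists at all and there is no simplicial map to $\mathcal{N}(K_3)$ along which to pull back antistars. The paper's argument is far lighter than either of your plans and is available from the facet description you already have: after relabeling the vertices (the paper's lexicographic order) the $3k+1$ facets become the cyclic intervals of length $k+2$, each facet has a free face given by the pair of its two endpoints, and collapsing these free faces shortens every facet by one; iterating, the complex collapses to the $1$-dimensional cycle on $3k+1$ vertices, which is homotopy equivalent to $\mathbb{S}^1$. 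To complete your write-up, replace the Morse/coloring plans by this elementary-collapse induction, or equivalently verify directly in your labeling that each facet $i+D$ contains an edge lying in no other translate of $D$ and collapse.
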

\begin{proof}

$H_k$ has $3k+1$ vertices, and we order them lexicographically. That is,
\begin{gather*}
v_1 = \{1, 4, \cdots, 3k-5, 3k-2\}, \quad
v_2 = \{1, 4, \cdots, 3k-5, 3k-1\}, \\
v_3 = \{1, 4, \cdots, 3k-4, 3k-1\}, \quad \cdots, \\
v_k = \{1, 5, \cdots, 3k-4, 3k-1\}, \quad
v_{k+1} = \{2, 5, \cdots, 3k-4, 3k-1\}, \\
v_{k+2} = \{2, 5, \cdots, 3k-4, 3k\}, \quad \cdots, \\
v_{2k} = \{2, 6, 9, \cdots, 3k-3, 3k\}, \quad
v_{2k+1} = \{3, 6, 9, \cdots, 3k-3, 3k\}, \\
v_{2k+2} = \{3, 6, 9, \cdots, 3k-3, 3k+1\}, \quad \cdots, \\
v_{3k} = \{3, 7, 10, \cdots, 3k-2, 3k+1\}, \quad
v_{3k+1} = \{4, 7, 10, \cdots, 3k-2, 3k+1\}.
\end{gather*}
After this ordering, $\mathcal{N}(H_k)$ has $3k+1$ facets characterized by each vertex, denoted by 
$F_i = \{i+k, i+k+1, \dots, i+2k+1\} \pmod{3k+1}, 1\le i\le 3k+1$. 

    Notice that for any $1 \leq i \leq 3k+1$, $F_i$ has a free face $\{i+k, i+2k+1\}$. We can remove these free faces without changing the homotopy type of $\mathcal{N}(H_k)$ by using an element collapse argument. After deleting these free faces, we get a simplicial complex with $3k+1$ facets: $\{1, 2, \cdots, k+1\}, \{2, \cdots, k+2\}, \cdots, \{3k+1, 1, 2, \cdots, k\}$. 
    By induction, $\mathcal{N}(H_k)$ is homotopy equivalent to a simplicial complex with $3k+1$ facets, $\{1, 2\}, \{2, 3\}, \cdots, \{3k+1, 1\}$, which is homotopy equivalent to $\mathbb{S}^1$. Therefore, we have $\mathcal{N}(H_k)\simeq \mathbb{S}^1$.
 
\end{proof}
Based on the above discussion, one might guess that the dimension of "the homotopy type", namely the spheres, of $\Delta_k^t(G)$ for a graph $G$ would be higher than that of $\mathcal{N}(H_k)$ induced by $G$. However, this is not the fact, due to the following counterexample.
\begin{ex}\label{counterexample}
    Consider the star graph $S_n$ (a central vertex $c$ connected to $n$ leaf vertices denoted by $[n]=\set{1,2,\dots,n}$). For $2\le k\le n$, the total cut complex $\Delta_k^t(S_n)$ is contractible, see [\cite{Bayer_2024_02}, Corollary 4.14].

    Now we investigate the induced $k$-independent graph. For $2\le k\le m$, $H_k(S_n)$ has vertices $\binom{[n]}{k}$; two of them are joined by an edge iff they are disjoint. In other words, $H_k$ is exactly the Kneser graph $KG(n,k)$. In \cite{Nilakantan}, the authors proved the homotopy type of $\mathcal{N}(H_2)$ is a wedge of $n^2-3n+1$ spheres of dimension $n-4$.
\end{ex}
In the following proposition, we prove a somewhat general result, which shows there is an open cover of $\mathcal{N}(H_k)$, with nerve complex $\Delta_k^t(G)$.
\begin{prop}\label{nerve-complex-general}
    Given a graph $G$, suppose that the independent number of $G$ is $\alpha(G)$. For $k\leq \alpha(G)$, let $H_k$ be induced $k$-independent graph. Then \(\Delta_k^t(G)\) is a nerve complex of an open cover (may NOT good) of $\mathcal{N}(H_k)$. 
\end{prop}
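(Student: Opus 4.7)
The plan is to construct an explicit open cover $\{A_i\}_{i \in V(G)}$ of $\mathcal{N}(H_k)$, modeled on the one used in the cycle case (Propositions \ref{good-cover}--\ref{nerve-total-cut}), and to verify that its nerve is $\Delta_k^t(G)$. Concretely, for each vertex $i \in V(G)$ I would set
$$A_i = \{F \in \mathcal{N}(H_k) : i \notin \mathrm{supp}(F)\}, \quad \text{where } \mathrm{supp}(F) := \bigcup_{w \in F} w \subseteq V(G),$$
so that $A_i$ is the induced subcomplex of $\mathcal{N}(H_k)$ on those independent $k$-sets of $G$ which do not contain $i$.

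The first step is to verify the covering property. Any face $F \in \mathcal{N}(H_k)$ admits a common neighbor $v$, an independent $k$-set disjoint from every element of $F$; this forces $\mathrm{supp}(F) \subseteq V(G) \setminus v$ to be a proper subset of $V(G)$, so some $i \in V(G) \setminus \mathrm{supp}(F)$ exists, placing $F$ in $A_i$. Second, for the nerve identification, a face $F$ lies in $\bigcap_{j} A_{i_j}$ precisely when $\mathrm{supp}(F) \cap I = \varnothing$, so the intersection is nonempty iff there is an independent $k$-set $w \subseteq V(G) \setminus I$ admitting a neighbor in $H_k$. Under $k \leq \alpha(G)$ (and in non-degenerate instances where any independent $k$-set of $V(G) \setminus I$ admits some disjoint independent $k$-set in $V(G)$), this is equivalent to $V(G) \setminus I$ containing an independent $k$-set, which is precisely the defining condition for $I$ to be a face of $\Delta_k^t(G)$.

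The main obstacle is that the intersections $A_{i_1} \cap \cdots \cap A_{i_m}$ need not be contractible for a general $G$: no structural analogue of the lexicographic Multicone matching from Proposition \ref{good-cover} is available, since stable $k$-subsets of a cycle have a very rigid combinatorial structure which is lost in general. This accounts for the \emph{may NOT good} qualifier in the statement; the Nerve Lemma cannot be invoked to promote the combinatorial nerve identification into a homotopy equivalence, and indeed Example \ref{counterexample} exhibits a graph $G$ for which $\mathcal{N}(H_k)$ and $\Delta_k^t(G)$ have genuinely different homotopy types.
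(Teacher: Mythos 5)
Your construction is correct in substance, but it uses a genuinely different cover from the paper's, and the difference is worth spelling out. The paper covers $\mathcal{N}(H_k)$ by $A_i=\{F\subseteq \Delta_v : v \text{ a vertex of } \mathcal{N}(H_k),\ i\notin v\}$, i.e.\ it groups faces according to a \emph{common neighbor} $v$ avoiding $i$, whereas you take the induced subcomplex on the vertices $w$ with $i\notin w$, a condition on $\operatorname{supp}(F)$ alone. Your choice makes the nerve computation transparent: $\bigcap_{i\in I}A_i$ is literally the induced subcomplex on the independent $k$-sets disjoint from $I$, so nonemptiness means exactly ``some vertex of $\mathcal{N}(H_k)$ avoids $I$.'' With the paper's cover, $F\in\bigcap_{i\in I}A_i$ only provides, for each $i\in I$ separately, a common neighbor $v_i$ of $F$ with $i\notin v_i$, and passing from this to a single independent $k$-set avoiding all of $I$ (which the paper's proof asserts) is not automatic; in fact it can fail. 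For instance, take $G=K_6$ minus the perfect matching $\{1,4\},\{2,5\},\{3,6\}$ and $k=2$: the vertex $\{1,4\}$ of $\mathcal{N}(H_2)$ lies in the paper's $A_1\cap A_2\cap A_3$, so that nerve contains $\{1,2,3\}$, which is not a face of $\Delta_2^t(G)$, while your support-based cover yields exactly $\Delta_2^t(G)$ there. So your route is not merely different but more robust. The one caveat is the degeneracy you flag explicitly: if some independent $k$-set has no disjoint partner (an isolated vertex of $H_k$), it is not a vertex of $\mathcal{N}(H_k)$ and the nerve can miss faces of $\Delta_k^t(G)$ (e.g.\ when $G$ has a unique independent $k$-set); this affects the paper's argument equally, and under the non-degeneracy hypothesis you state your proof is complete.
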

\begin{proof}
Suppose $G=(V(G),E(G))$, and $V(G)=[n]$. We may write $\mathcal{N}(H_k)$ as 
\[
\{F\subseteq \Delta_v:v \text{ is vertex of } \mathcal{N}(H_k)\}
\]
where $\Delta_v=\{w  \text{ is vertex of } \mathcal{N}(H_k) :v\cap w=\varnothing\}$.

For fixed $k\geq 1$ and any $1\leq i\leq n$, define
\[
A_i=\{ F\subseteq \Delta_v:v \text{ is vertex of } \mathcal{N}(H_k) \text{ such that } i\notin v\}.
\]

Then we see that 
\[
\mathcal{N}(H_k)=\bigcup_{i=1}^n A_i
\]

We show that $\Delta_k^t(G)$ is a nerve complex of $\mathcal{A}=\{A_i\}_{1\leq i\leq n}$. For an index set $I=\{i_1,i_2,\dots,i_m\}$, $\bigcap_{i_j\in I} A_{i_j}$ is nonempty if and only if there exists a vertex $v$ of $\mathcal{N}(H_k)$ such that $i_1,i_2,\dots,i_m\notin v$. Similarly to the previous proof, we obtain that it is equivalent to say $[n]\setminus I$ contains an independent set of size $k$. Notice the definition of the total cut complex, we conclude that $I$ is a face of $\Delta_k^t(G)$. Hence the nerve complex of $\mathcal{A}$ is exactly $\Delta_k^t(G)=\mathfrak{N}(\mathcal{A})$.   

\end{proof}
A key step in this proof is to construct the open cover $\{A_i\}$. Thus, in order to utilize the Nerve Lemma \ref{Nerve Lemma} and \ref{strong nerve lemma}, we need to prove that the non-empty intersection of these open sets satisfies some desirable properties. However, under normal circumstances, this matter is quite difficult, or the conclusion we want is simply not valid. We look forward to better methods that can more comprehensively reveal the relationship between these two complexes, thereby helping to solve some classic problems.
\section{Acknowledgments}

The authors thank the organizers of the 2025 PKU Algebraic Combinatorics Experience, where this work originated. The authors also thank Lei Xue, our mentor, who provided us with a lot of help.
We are also grateful to the anonymous referees for their careful reading of the paper.

\newpage

\renewcommand*{\bibfont}{\footnotesize}
    \bibliography{reftoneighbor}
    \bibliographystyle{apalike}

\end{document}